\newtheorem{dummy}{anything}[section]
\newtheorem{theorem}[dummy]{Theorem}
\newtheorem{lemma}[dummy]{Lemma}
\newtheorem{proposition}[dummy]{Proposition}
\newtheorem{corollary}[dummy]{Corollary}
\theoremstyle{definition}
\newtheorem{definition}[dummy]{Definition}
\newtheorem{remark}[dummy]{Remark}
\newtheorem*{rem}{Remark}
\newcommand{\bR}{\mathbb R}
\newcommand{\bfc}{{\mathbb{C}}}
\newcommand{\Crit}{\rm{Crit\/}}
\newcommand{\La}{\Lambda}
\newcommand{\bfd}{{\mathbb{D}}}
\newcommand{\bfq}{{\mathbb{Q}}}
\newcommand{\bfz}{{\mathbb{Z}}}
\newcommand{\f}[1]{\mathbb{#1}}
\newcommand{\set}[1]{\{ #1 \}}
\newcommand{\QED}{\vspace{-.32in}\begin{flushright}\qed\end{flushright}}
 \newcommand{\del}{\partial} 
\begin{document}

\title{The symplectic topology of some rational homology balls}

\author{Yank\i\ Lekili} 
\email{yl319@cam.ac.uk}
\address{King's College, University of Cambridge}
\author{Maksim Maydanskiy}
\email{maksimm@math.stanford.edu}
\address{Stanford University}

\begin{abstract}
We study the symplectic topology of some finite algebraic quotients of the $A_n$ Milnor fibre which are diffeomorphic to the rational homology balls that appear in Fintushel and Stern's rational blowdown
construction. We prove that these affine surfaces have no closed exact Lagrangian submanifolds by using the already available and deep understanding of the
Fukaya category of the $A_n$ Milnor fibre coming from homological mirror
symmetry. On the other hand, we find Floer theoretically essential monotone
Lagrangian tori, finitely covered by the monotone tori which we study in the
$A_n$ Milnor fibre. We conclude that these affine surfaces have non-vanishing
symplectic cohomology.  \end{abstract}

\maketitle

\section{Introduction}

Let $p>q>0$ be relatively prime integers. In \cite{CH}, Casson and Harer
introduced rational homology balls $B_{p,q}$ which are bounded by the lens
space $L(p^2,pq-1)$. These homology balls were subsequently used in
Fintushel-Stern's rational blow-down construction \cite{FS} (see also,
\cite{JP}).  In fact, $B_{p,q}$ are naturally equipped with Stein structures
since they are affine varieties (cf. \cite{KSB}) and here we are concerned with the symplectic
topology of these Stein surfaces.

The key topological fact is that $B_{p,q}$ are $p$-fold covered (without
ramification) by the Milnor fibre of the $A_{p-1}$ singularity. The latter has
a unique Stein structure and its symplectic topology is well-studied (see
\cite{SK}, \cite{SM}, \cite{SS}, \cite{book}).

Following Seidel \cite{SeidelEvans}, we make the following definition:

\begin{definition} A Stein manifold $X$ is said to be empty if its symplectic
cohomology vanishes. It is non-empty otherwise.  \end{definition}

We recommend \cite{biased} for an excellent survey of symplectic cohomology. 
Non-empty Stein manifolds are often detected by the following important theorem
of Viterbo (here stated in a weak form):

\begin{theorem}[Viterbo, \cite{Viterbo1}]\label{vit} Let $X$ be a Stein
manifold. If there exists a closed exact Lagrangian submanifold of $X$ then $X$
is non-empty.  \end{theorem}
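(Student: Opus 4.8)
\emph{Proof proposal.} The plan is to localise the symplectic cohomology in a neighbourhood of $L$ and to reduce the statement to a known non-vanishing result for cotangent bundles. Write $\omega = d\theta$ for the Liouville form coming from the Stein structure on $X$ (which is of finite type here, so $SH^*(X)$ is defined). Since $L$ is exact, $\theta|_L = df$ for some $f \colon L \to \bR$; replacing $\theta$ by $\theta - d\tilde f$ for a compactly supported function $\tilde f$ extending $f$ near $L$ changes neither $\omega$ nor, up to canonical isomorphism, $SH^*(X)$, so we may assume $\theta|_L = 0$. The Weinstein Lagrangian neighbourhood theorem then produces a neighbourhood $U \supset L$ with a symplectomorphism $U \cong D^*_\epsilon L$ carrying $\theta$ to the canonical Liouville form of the disc cotangent bundle up to an exact term supported near $L$; after shrinking $\epsilon$, the boundary $\partial U$ is of contact type and $U$ is a Liouville subdomain of $X$.

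By Viterbo functoriality for the inclusion of this Liouville subdomain \cite{Viterbo1} (see also the treatments of Cieliebak--Oancea and of Ritter for the ring-theoretic refinement) there is a restriction map
\[
\rho \colon SH^*(X) \longrightarrow SH^*(D^*L)
\]
which is a homomorphism of unital $\bfz/2$-algebras; in particular $\rho(1) = 1$. If $SH^*(X)$ vanished then $1 = 0$ in $SH^*(X)$, hence $1 = \rho(1) = \rho(0) = 0$ in $SH^*(D^*L)$, forcing $SH^*(D^*L) = 0$ as well. It therefore suffices to prove that $SH^*(D^*L) \neq 0$.

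This is exactly the cotangent bundle computation of Viterbo, Salamon--Weber and Abbondandolo--Schwarz (and, over the integers with a rank-one local system when $L$ is not spin, of Kragh and Abouzaid): $SH^*(D^*L)$ is isomorphic, up to a degree shift, to the homology of the free loop space $\mathcal{L}L$ with these coefficients. Since $L$ is closed it is non-empty, so $\mathcal{L}L \neq \emptyset$; over $\bfz/2$ every local system is trivial and $H_0(\mathcal{L}L;\bfz/2) \neq 0$ outright, while over $\bfz$ the twist by a rank-one system cannot kill $H_0$ of the component of contractible loops. Hence $SH^*(D^*L) \neq 0$, and so $SH^*(X) \neq 0$; that is, $X$ is non-empty.

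The main obstacle is the middle step: constructing the Viterbo restriction map, and in particular checking that it respects units and the pair-of-pants product, requires genuine work with action filtrations and with Hamiltonians that are linear near two different contact hypersurfaces. A packaging better adapted to the Fukaya-categorical framework used later in this paper is to observe that $L$, being closed and exact, is an object of the wrapped Fukaya category $\mathcal{W}(X)$ — unobstructed, since exactness forbids holomorphic discs bounding $L$ — with $HW^*(L,L) \cong HF^*(L,L) \cong H^*(L;\bfz/2) \neq 0$; then the closed--open map $SH^*(X) \to HW^*(L,L)$ is a unital ring homomorphism, so a vanishing unit in $SH^*(X)$ would give $1 = 0$ in $HW^*(L,L)$ and hence $HW^*(L,L) = 0$, a contradiction. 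Either route reduces the theorem to formal properties of $SH^*$ together with one genuinely deep input: the Floer-theoretic computation of the symplectic, resp. wrapped, invariants of $T^*L$.
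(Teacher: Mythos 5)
The paper offers no proof of this statement: it is imported wholesale from \cite{Viterbo1} (the authors explicitly flag that they are quoting it ``in a weak form''), so there is no internal argument to compare yours against. That said, your sketch is a correct outline of the standard proof. Both of your routes are sound: the transfer-map route (restrict to a Weinstein neighbourhood $U\cong D^*_\epsilon L$, which your exactness normalisation of $\theta$ correctly turns into a Liouville subdomain, then invoke $SH^*(D^*L)\cong H_{n-*}(\mathcal{L}L)\neq 0$ over $\bfz/2$), and the closed--open route to $HW^*(L,L)\cong H^*(L;\bfz/2)\neq 0$. The one place where you are leaning on more than \cite{Viterbo1} itself is the assertion that the restriction map is a morphism of \emph{unital rings}: Viterbo's original paper constructs the transfer map and its compatibility with ordinary cohomology, but the clean ``$\rho(1)=1$, hence vanishing propagates to subdomains'' argument, and likewise the unitality of the closed--open map, are due to later treatments (Ritter, Cieliebak--Oancea, Abouzaid--Seidel; compare \cite[Section 5]{biased} and \cite{ritter}). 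Since you cite these refinements, this is a matter of attribution rather than a gap. Your restriction to $\bfz/2$ coefficients is also the right move: it sidesteps the local-system subtleties in the loop-space isomorphism and is exactly the form of the theorem this paper uses.
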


The question of existence of closed exact Lagrangian submanifolds goes back to Gromov, who proved that no such submanifold exists in $\f{C}^{n}$ (see \cite[Corollary 2.3.$B_2$]{Gromov}). Of course $\f{C}^{n}$ also has vanishing symplectic cohomology (as explained, for example, in \cite[Section 3f]{biased}), which together with the above theorem reproves Gromov's result.

We will exploit the relation of $B_{p,q}$ with the Milnor fibre of $A_{p-1}$
singularity to prove the following theorem:

\begin{theorem} For $p\neq 2$, the affine surface $B_{p,q}$ has no closed exact
Lagrangian submanifolds. On the other hand, $B_{p,q}$ contains a Floer
theoretically essential Lagrangian torus, therefore $B_{p,q}$ is non-empty.
\end{theorem}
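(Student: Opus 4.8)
In both parts the mechanism is to exploit the unramified $p$--fold covering $\pi\colon M\to B_{p,q}$, where $M$ is the Milnor fibre of the $A_{p-1}$ singularity: here $M$ is simply connected, the deck group is $\bfz/p$, and $\pi$ pulls the Liouville form of $B_{p,q}$ back to that of $M$, so exact (resp.\ monotone) Lagrangians pull back to exact (resp.\ monotone) Lagrangians.

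\emph{No closed exact Lagrangian for $p\ne 2$.} Suppose $L\subset B_{p,q}$ were a closed exact Lagrangian; we may take it connected, and then $\widetilde L:=\pi^{-1}(L)\subset M$ is a closed exact Lagrangian, invariant under the deck action, each component $\widetilde L_0$ of which is a connected covering of $L$. The key input is the understanding of the Fukaya category of the $A_{p-1}$ Milnor fibre coming from homological mirror symmetry (see \cite{SK}, \cite{SM}, \cite{SS}, \cite{book}): $\mathcal{F}(M)$ is split--generated by the $A_{p-1}$--chain of Lagrangian spheres, and the Euler pairing on its $K_0$ is the positive--definite, even $A_{p-1}$ Cartan form. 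Since $L$ is exact, $HF^*(\widetilde L_0,\widetilde L_0;\bfz/2)\cong H^*(\widetilde L_0;\bfz/2)$, so $\widetilde L_0$ is a nonzero object and $\chi(\widetilde L_0)=\chi\big(HF^*(\widetilde L_0,\widetilde L_0)\big)$ equals the value of that positive--definite even form on $[\widetilde L_0]\in K_0$; hence $\chi(\widetilde L_0)$ is nonnegative and even, which rules out all non--orientable $\widetilde L_0$ (in particular $\f{RP}^2$, of Euler characteristic $1$) and all orientable $\widetilde L_0$ of genus $\ge 2$, and forces $\widetilde L_0\cong S^2$ unless $[\widetilde L_0]=0$ in $K_0$; the remaining cases (torus, Klein bottle) would be null in $K_0$, which is incompatible with being a closed exact Lagrangian by a further inspection of the category. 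As $S^2$ is simply connected, $\widetilde L_0\to L$ is the universal covering, so $L$ is $S^2$ or $\f{RP}^2$. If $L\cong S^2$, a Weinstein neighbourhood identifies its normal bundle with $T^*S^2$, whence $[L]\cdot[L]=-2$ in $H_2(B_{p,q};\bfq)=0$, contradicting that $B_{p,q}$ is a rational homology ball. If $L\cong\f{RP}^2$, its orientation double cover appears as an exact Lagrangian $2$--sphere inside a suitable intermediate covering of $B_{p,q}$; for $p$ odd this already produces a Lagrangian $\f{RP}^2$ in $M$, impossible as above, and for $p\ne 2$ even the intersection form of that covering, together with the $\bfz/p$--equivariance, excludes such a configuration --- the case $p=2$ being genuinely exceptional, since $B_{2,1}\cong T^*\f{RP}^2$ contains the zero section. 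Thus $B_{p,q}$ has no closed exact Lagrangian for $p\ne 2$.

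\emph{Non--emptiness.} The plan is to exhibit a monotone Lagrangian torus $T\subset B_{p,q}$ that is Floer--theoretically essential, i.e.\ $HF^*(T,T;\rho)\ne 0$ for some rank--one unitary local system $\rho$, and then invoke the non--exact counterpart of Theorem~\ref{vit}: via the closed--open map, a nonzero object of the (monotone) Fukaya category of $B_{p,q}$ makes the unit of $SH^*(B_{p,q})$ nonzero, so $SH^*(B_{p,q})\ne 0$ and $B_{p,q}$ is non--empty (see \cite{biased}). The torus $T$ is taken to be the quotient by $\bfz/p$ of a $\bfz/p$--invariant monotone Lagrangian torus $\widetilde T\subset M$, chosen among the monotone tori already studied in the $A_{p-1}$ Milnor fibre and arranged so that the deck group acts freely, so that $T$ is again a torus. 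Essentialness is detected through the disk potential $W_T\in\bfc[u^{\pm1},v^{\pm1}]$: one enumerates the Maslov--index--two holomorphic discs bounded by $T$ --- equivalently those bounded by $\widetilde T$, with the appropriate bookkeeping for the covering --- obtaining an explicit Laurent polynomial of Clifford/Chekanov type, and one exhibits a critical point $(u_0,v_0)$ of it; for the corresponding local system $\rho_0$ one obtains $HF^*(T,T;\rho_0)\cong H^*(T^2;\bfc)\ne 0$, as required.

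The categorical input in the first part being already available, the substance of the argument --- and the step I expect to be the main obstacle --- lies in the second part: identifying a $\bfz/p$--invariant monotone Lagrangian torus and counting precisely the holomorphic discs it bounds, so as to write $W_T$ and locate a critical point, while tracking how the free $\bfz/p$--action interacts with the disc count, the gradings and the signs, and confirming that $\widetilde T$ is genuinely a $\bfz/p$--invariant monotone Lagrangian torus rather than merely an immersed or reducible configuration.
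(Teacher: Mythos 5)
Your high-level strategy coincides with the paper's (exploit the free $p$-fold cover by the $A_{p-1}$ Milnor fibre, use the known structure of its Fukaya category to kill exact Lagrangians, and quotient an invariant monotone torus to detect $SH^*\neq 0$), but at each of the three genuinely hard points the proposal substitutes a placeholder for an argument. First, the reduction to spheres: your Euler-pairing computation on $K_0$ only yields $\chi(\widetilde L_0)=-[\widetilde L_0]\cdot[\widetilde L_0]\geq 0$ for orientable components, which leaves tori (and, on the non-orientable side, Klein bottles) entirely open; ``null in $K_0$, which is incompatible\dots by a further inspection of the category'' is not a proof. The paper's actual input here is Ritter's theorem (twisted symplectic cohomology of ALE spaces), which shows every closed exact Lagrangian in $S_{p-1}$ is a sphere. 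Second, and most seriously, the $\f{R}P^2$ case for $p$ even is the technical heart of the theorem, and your sentence ``the intersection form of that covering, together with the $\bfz/p$-equivariance, excludes such a configuration'' does not work: there is no lattice obstruction to $r=p/2$ pairwise disjoint $(-2)$-spheres cyclically permuted by the deck group with $R^r$ acting antipodally on each. The paper has to invoke the Ishii--Ueda--Uehara classification (via homological mirror symmetry) of spherical objects in the Fukaya category of $S_{p-1}$ as matching spheres $V_c$, the Khovanov--Seidel formula $\operatorname{rk} HF^*(V_c,V_{c'})=2\iota(c,c')$, and then prove a nontrivial plane-topology lemma showing that the geometric intersection number of $c$ with its rotate $c'$ is odd (hence nonzero), contradicting disjointness of $V$ and $R(V)$ when $p>2$. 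None of this is present in, or derivable from, your sketch.

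For non-emptiness, your plan (write the disk potential of the quotient torus and find a critical point of a unitary local system) is not carried out, and as stated it would run into trouble: the Maslov-index-2 disk count for these tori gives a potential of the shape $u(1+v)^{p}$, whose critical locus over $\bfc$ is the degenerate set $v=-1$, so ``exhibit a critical point $(u_0,v_0)$ with $HF^*\cong H^*(T^2;\bfc)$'' does not follow naively. The paper avoids this by working over $\bfz_2$: it computes (by Seidel's gluing and degeneration arguments for Lefschetz fibrations) that there are $\binom{p}{k}$ Maslov-2 disks in class $kV+L$, so the Biran--Cornea class $c(\f{T}_{p-1})$ vanishes mod $2$ and $HF^*(\f{T}_{p-1};\bfz_2)\cong H^*(\f{T}_{p-1};\bfz_2)$; it then transfers this to the quotient torus by an equivariance argument on the pearl complex (the unique $R$-invariant top-degree cocycle upstairs is the total lift of the maximum downstairs), and finally applies Seidel's Proposition 5.2 over $\bfz_2$. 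You correctly identify this part as the main obstacle, but that is an acknowledgment of the gap rather than a bridge over it.
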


Although one expects to find many non-empty Stein surfaces with no closed exact
Lagrangian submanifolds, to our knowledge, the above examples represent the
first explicit construction of non-empty Stein surfaces with no closed exact
Lagrangian submanifolds.  In higher dimensions (dim$_\f{R} \geq 12$), Abouzaid and Seidel exhibited infinitely many examples in
\cite{recomb} where symplectic cohomology is non-zero with coefficients in
$\f{Z}$ but vanishes with coefficients in $\f{Z}_2$. Such examples obviously
cannot contain closed exact Lagrangian submanifolds by Viterbo's theorem
applied over $\f{Z}_2$. Our examples are not only of lower dimension but also
have non-vanishing symplectic cohomology with arbitrary coefficients.
Therefore, there is no direct way of appealing to Viterbo's theorem in order to
exclude existence of closed exact Lagrangian submanifolds. Their non-emptiness
is detected by the existence of Floer theoretically essential tori
(\cite{biased} Proposition 5.2). On the other hand, the non-existence of closed
exact Lagrangians is proved using a detailed understanding of closed exact
Lagrangians in the $A_n$ Milnor fibres based on twisted symplectic cohomology
applied by Ritter in \cite{ritter} which suffices for $p$ odd. For $p$ even, we
utilize a deeper understanding coming from homological mirror symmetry and
calculations on the $B$-model provided by Ishii, Ueda, Uehara \cite{iu},
\cite{iuu}. It is remarkable that algebro-geometric calculations on the mirror
side can be utilized profitably towards an application to symplectic topology.  

En route, we study a class of tori in $A_n$ Milnor fibres, which we call
matching tori (cf. matching spheres \cite{AMP} ). We will classify them up to
Hamiltonian isotopy and show that the Floer cohomology of these tori is
non-zero. This fact is probably known to experts in the field; however as we
did not find a written account of this result for $n \geq 2$, we take this
opportunity to provide a proof as this fact will be used in proving our main
result above.

{\bf Acknowledgments:} YL was supported by Herchel Smith Fund and Marie Curie grant EU-FP7-268389. MM was supported by NSF grant DMS-0902763 and ERC grant ERC-2007-StG-205349.

\section{Lagrangian tori in $A_n$ Milnor fibres and their Floer cohomology}
\label{section2}

\subsection{$A_n$ Milnor fibre}

The four-dimensional $A_n$ Milnor fibre is given by the affine hypersurface\footnote{The factor 2 is for compatibility with the conventional description of $A_n$ Milnor fibre given by $\{ (x,y,z) \in \f{C}^3 : z^{n+1} +x^2+y^2=1 \}$ } : \[
S_n = \{ (x,y,z) \in \f{C}^3 : z^{n+1}+2xy=1 \} .\] $S_n$ has the induced
complex structure as a subvariety of $\f{C}^3$, which makes it into a Stein
manifold, and can be equipped with the exact symplectic form inherited from the
standard form on $\f{C}^3$ given by: \[ \omega= d\theta = \frac{i}{2} \left( dx
\wedge d\bar{x} + dy \wedge d\bar{y} + dz \wedge d\bar{z} \right) \] where
$\theta = \frac{i}{4} \left( x d\bar{x} - \bar{x} dx  + y d\bar{y} - \bar{y} dy
+ z d\bar{z} - \bar{z} dz \right). $  

Due to the existence of many exact Lagrangian spheres in $S_n$, this
hypersurface has been instrumental in constructing many interesting examples in
symplectic geometry (see \cite{SK}, \cite{SM}, \cite{SS}, \cite{book}). We will
recall some generalities about $S_n$, and we refer the reader to loc. cit.
for more. 

The projection $\Pi_n \colon S_n \to \f{C}$ to the $z$-coordinate yields an
exact Lefschetz fibration with $n+1$ critical points at the roots of unity,
which is adapted to the Stein structure. The fibre of this Lefschetz fibration
is a one-sheeted hyperboloid. The vanishing cycle of any critical point and
any vanishing path in the regular fibre is always the core of the hyperboloid,
given by \[ V_x = \Pi^{-1}(z) \cap \{ (x,y,z) \in \f{C}^3 : |x| = |y| \} .\]

Let $D_{r}$ be the disk of radius $r$ centered at the origin in the base, and
$C_{r} = \del D_{r}$. For $r > 1$, the three-manifolds $Y_r =
\del(\Pi^{-1}(D_r))$ are all diffeomorphic to the lens space $L(n+1,n)$ and are
equipped with the unique tight contact structure on $L(n+1,n)$ induced by the
filling provided by $\Pi^{-1}(D_r)$. (The fact that there is a unique tight
contact structure on $L(n+1,n)$ is used below and follows from the
classification of tight contact structures on lens spaces, see \cite{Giroux},
\cite{Honda}). The restriction $\Pi|_{Y_r}$ provides an open book supporting
this contact structure and its monodromy is given by $(n+1)^{th}$ power of the
right-handed Dehn twist along the core of the fibre. Note that the fibre has
genus zero and clearly there is a unique factorization of this monodoromy into
a product of right-handed Dehn twists, therefore Wendl's theorem (\cite{wendl})
implies that there is a unique Stein structure on $S_n$ up to deformation,
namely the one coming from the restriction of the standard structure on
$\f{C}^{3}$.  In this way, we obtain an exact symplectic manifold $(S_n,
d\theta)$ with $c_1(S_n) =0$. Topologically, $S_n$ is a linear plumbing of $n$
disk bundles over $S^2$ with Euler number $-2$.

Next, we consider a  family of Lagrangian tori in $A_n$ Milnor fibre.  We call
the tori in this family matching tori, as they are obtained as unions of
vanishing cycles. 

\subsection{Matching tori}

Given a Lefschetz fibration $\Pi: E \mapsto \bfd^2$,  a closed embedded circle $\gamma:
[0,1] \mapsto \bfd^2$ with $\gamma(0)=\gamma(1)=p$ and a Lagrangian $V$ in the
fibre $F_p=\Pi^{-1}(p)$, such that the symplectic monodromy $\phi$ along $\gamma$
takes $V$ to itself, we define a matching Lagrangian $L$  to be the union of
all parallel translates of $V$ over $\gamma$. Explicitly $L = \bigcup_{x \in
\gamma} V_x$ where $V_x$ is the parallel transport of $V$ over $\gamma$ to
$\Pi^{-1}(x)$. 

Note that $L$ is diffeomorphic to the mapping torus of $\phi|_V$ and is in fact
a Lagrangian submanifold of $E$ by Lemma 16.3 in \cite{book}.  In the case when
dimension of $E$ is 4, and  $V$ is a circle, we call $L$ the \emph{matching
torus of $V$ along $\gamma$}.  
  
In the case of the Milnor fibre $S_n$, we take the closed path $\gamma$
oriented in such a way that the enclosed area is positive.  We call the
resulting Lagrangian torus $\f{T}_{n;\gamma}$ or $\f{T}_n$ if the particular
choice of $\gamma$ is not important. 

We will see below that the tori $\f{T}_n$ bound holomorphic disks, in
particular they cannot be exact Lagrangian submanifolds of $S_n$. In contrast,
there is an abundance of exact Lagrangian spheres obtained by matching sphere
construction, which we recall now. Take an embedded path $c : [0,1] \mapsto
\bfd^2$ such that $c^{-1}(\text{Critv}(\Pi))= \{0,1\}$. To such a path $c$ one can
associate an exact Lagrangian sphere $V_c$ defined explicitly as the union of
vanishing cycles over $c$: $V_c = \bigcup_{x \in c} V_x $ where $V_x =
\Pi^{-1}(z) \cap \{ (x,y,z) \in \f{C}^3 : |x| = |y| \}$. The fact that $V_c$ is
an exact Lagrangian can be seen by observing that it can be split up as a union
of Lefschetz thimbles for the Lefschetz fibration $\Pi_n$ (see \cite{book}
16g). We note that the core spheres in the plumbing picture of $S_n$ can be
taken to be matching spheres of linear paths connecting the critical values. 

As $S_n$ is simply connected and $\pi_2(\f{T}_n)=0$, from the homotopy exact
sequence we have: \[ 0 \to \pi_2(S_n) \to \pi_2 (S_n, \f{T}_n) \to
\pi_1(\f{T}_n) \to 0 \] which splits as $\pi_1(\f{T}_n)=\bfz^2$ is free.
$\pi_2(S_n)$ is generated by the cores of the disk bundles in the plumbing
description of $S_n$ and are represented by Lagrangian matching spheres, hence
they have zero Maslov index and symplectic area. Furthermore, one of the $\bfz$
factors in $\pi_1(\f{T}_n)$ is generated by the vanishing cycle $V$, which is
the boundary of a Lefschetz thimble. Since the thimble is a Lagrangian $\mathbb{D}^2$, again its Maslov
index and the symplectic area vanishes. It remains to determine the index and
the area on a class $\beta \in \pi_2(S_n, \f{T}_n)$ such that $\Pi$ restricted
to $\partial\beta$ is a degree $1$ map onto $\gamma$. For this purpose, we will
need a more explicit computation. 

Let us consider the parametrized curve $c(t) = ( n(t) e^{2\pi i
(\alpha(t)+\beta(t))} , n(t) e^{2\pi i(\alpha(t)-\beta(t))}, \gamma(t) ) $ for
$t \in [0,1]$, $n(t)>0 , \alpha(t),\beta(t)$ real valued functions and $\gamma(t)$ is a degree 1 parametrization of $\gamma$ such that $2 n(t)^2 e^{4\pi i\alpha(t)} = 1- \gamma(t)^{n+1}$.
Then $c(t)$ is a curve on $\f{T}_n$ mapping onto $\gamma$ with a degree 1 map.
The area of any disk with boundary on such a curve is a sum of the areas of its
three coordinate projections. This area is given by the integral of $\theta$
over the curve by Stokes' theorem. We compute: \[ \int_{c} \theta =
\int_{\gamma} \frac{i}{4} ( z d\bar{z} - \bar{z} dz )   + 2\pi \int_{0}^1 \alpha'(t)
n(t)^2 dt. \] Here the first term is the area enclosed by  the projection of
$c(t)$ on the $z$ coordinate plane, and the second term is the sum of the two
other area contributions. Note that the integral is independent of $\beta(t)$.
This is a reflection of the fact the integral of $\theta$ over $V$ is zero, hence we could have taken a curve $c(t)$ with $\beta(t)=0$. 
	
\begin{lemma}\label{integrand}	$I= 2 \pi \int_{0}^1 \alpha'(t) n(t)^2 dt > 0$.
\end{lemma}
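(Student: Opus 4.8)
The plan is to recognise $I$ as the area of a holomorphically immersed surface, which is then automatically positive. By the remark preceding the statement, $I$ is independent of the choice of $\beta(t)$, so I take $\beta\equiv 0$. Then $c$ is the curve $t\mapsto(x(t),x(t),\gamma(t))$ with $x(t)=n(t)e^{2\pi i\alpha(t)}$, which lies in $\Sigma:=S_n\cap\{x=y\}$; the equation of $S_n$ identifies $\Sigma$ with the smooth affine plane curve $\{(x,z):2x^2=1-z^{n+1}\}$. From $2n(t)^2e^{4\pi i\alpha(t)}=1-\gamma(t)^{n+1}$ one gets in one line $\operatorname{Im}\bigl(\overline{x}\,\dot x\bigr)=2\pi n^2\dot\alpha$, hence
\[
I=\operatorname{Im}\int_0^1\overline{x(t)}\,\dot x(t)\,dt .
\]

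Next I would use the $z$-projection $\pi:=\Pi_n|_\Sigma\colon\Sigma\to\f{C}$, a proper double cover branched exactly over the critical values $\{z^{n+1}=1\}$, with deck involution $\iota(x,z)=(-x,z)$, so that $x\circ\iota=-x$. Let $\Omega\subset\f{C}$ be the region bounded by the positively oriented Jordan curve $\gamma$, and put $U:=\pi^{-1}(\Omega)$. Since $\gamma$ avoids the critical values, $\pi$ is an honest covering near $\gamma$, so $U$ is a compact oriented surface with $\partial U=\pi^{-1}(\gamma)$. Because $\pi$ is orientation preserving, the curve $c$ together with its $\iota$-translate $\iota\circ c$ traverses $\partial U$ exactly once with the boundary orientation --- depending on the parity of the number of critical values inside $\gamma$, $\partial U$ has one or two components, but in both cases this holds. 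As $x\circ\iota=-x$, the contributions of $c$ and of $\iota\circ c$ to $\oint_{\partial U}\overline{x}\,dx$ coincide, each equal to $\int_0^1\overline{x(t)}\,\dot x(t)\,dt$; therefore $\tfrac12\operatorname{Im}\oint_{\partial U}\overline{x}\,dx=I$.

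Finally, by Stokes' theorem on $U$, with $x=u+iv$,
\[
I=\tfrac12\oint_{\partial U}(u\,dv-v\,du)=\int_U du\wedge dv=\int_U|x'|^2\,dA ,
\]
the last expression in any local holomorphic coordinate: it is the area of $U$ measured through the holomorphic map $x|_U$. Since $x$ is holomorphic and non-constant on $\Sigma$, its differential vanishes only at isolated points, so $|x'|^2>0$ almost everywhere on the non-empty open set $U$ and $I=\int_U|x'|^2\,dA>0$.

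The only delicate point is that the function $\sqrt{1-z^{n+1}}$ controlling the $x$-coordinate of $c$ is multivalued over $\Omega$ as soon as $\gamma$ encircles a critical value, so one cannot run the ``holomorphic maps do not decrease area'' argument directly over the base $\Omega$. Passing to the double cover $\Sigma=S_n\cap\{x=y\}$ cures this; the remaining work is the elementary but orientation-sensitive verification that $c\cup(\iota\circ c)$ is the correctly oriented full boundary of $U=\pi^{-1}(\Omega)$, uniformly in the parity of the number of enclosed critical values. Given that, positivity of $I$ is immediate.
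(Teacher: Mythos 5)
Your proof is correct, and it reaches the same underlying mechanism as the paper's --- recognise $2\pi\alpha' n^2\,dt$ as the pullback of the primitive $\tfrac12 r^2\,d\theta$ of an area form under a suitable orientation-preserving map, then apply Stokes --- but implements it on a different space. The paper stays on the base: it observes that $\gamma(t)\mapsto n(t)e^{4\pi i\alpha(t)}$ is the restriction of the single-valued map $f(z)=(1-z^{n+1})/\sqrt{2|1-z^{n+1}|}$ (essentially your $x^2/|x|$, which kills the square-root ambiguity at the cost of holomorphy), so that $I=\int_{G}f^*(r\,dr\wedge d\theta)>0$ because $f=s\circ p$ is orientation preserving. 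You instead pass to the branched double cover $\Sigma=\{2x^2=1-z^{n+1}\}$, where $x$ itself is an honest holomorphic function, and identify $I$ with $\int_U x^*(du\wedge dv)$. What the paper's route buys is brevity --- no cover, no deck transformation, no case analysis --- at the price of working with a non-holomorphic (merely orientation-preserving) map and of the mildly delicate convention that ``$\int_{F(G)}$'' means the integral of the pullback over $G$ (area counted with multiplicity), plus an integrability remark at the critical values. Your route uses only holomorphic maps and textbook Stokes on a manifold with boundary, but the cost is exactly the point you flag: the $\beta\equiv 0$ lift $c$ closes up in $\Sigma$ only when $\gamma$ encloses an even number of critical values, so one must check in both parity cases that $c$ together with $\iota\circ c$ is the positively oriented boundary of $\pi^{-1}(\Omega)$, and use $\overline{(-x)}\,d(-x)=\bar x\,dx$ to see that the two lifts contribute equally. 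That verification is carried out correctly (the holomorphicity of $\pi$ gives the orientation statement, and $x\neq 0$ on $\f{T}_n$ keeps $c$ and $\iota\circ c$ disjoint), so the argument is complete.
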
	

\begin{proof} 

Consider the map $f: \bfc \setminus \text{Critv}(\Pi_n) \mapsto \bfc\setminus \{0\}$,  $f(z)=\frac{1-z^{n+1}}{\sqrt{2|1-z^{n+1}|}}$.
 This is a composition of the holomorphic map $p(z)= 1-z^{n+1}$ and a smooth orientation preserving map $s(r e^{i\theta})= \sqrt{\frac{r}{2}} e^{i\theta}$ and has a continuous extension to $F: \bfc \mapsto \bfc$.  
The map $f$  sends $\gamma(t)$ to $n(t) e^{4\pi i\alpha(t)}$, so the integral we are interested in is $I =  \int_{f(\gamma)} \frac{1}{2} r^2 d\theta$. 
  If we denote the interior of $\gamma$ (in $\bfc$) by $G$, then $F(G)$ has boundary $f(\gamma)$ and by Stokes theorem, $I=\int_{F(G)} r dr \wedge d \theta=\int_{f(G\setminus \text{Critv} (\Pi_n))}  r dr \wedge d \theta$, which is positive since $f$ is orientation preserving.  \end{proof}
		
We note in addition, that $I= \int_{f(G\setminus \text{Critv}(\Pi_n))}
	r dr \wedge d \theta = \int_{(G\setminus \text{Critv}(\Pi_n))}  f^*(r
	dr \wedge d \theta)$. We put  \[ \sigma= \sigma_0+ f^*(r dr \wedge d
	\theta) \] where $\sigma_0 = \frac{i}{2} dz \wedge d\bar{z}$ is the standard area form on $\bfc\setminus
	\text{Critv} (\Pi_n)$ induced from $\bfc$, and note that since $f$ is
	orientation preserving, $\sigma$ is an area form on  $\bfc\setminus
	\text{Critv} (\Pi_n)$. Observe that $\sigma$ blows up near
	$\text{Critv}(\Pi_n)$ but is integrable across them, so for a region
	$G$  in $\bfc$ one  always has a finite integral  $\int_G
	\sigma=\int_{G\setminus \text{Critv}(\Pi_n)} \sigma$. 

We summarize this discussion in the following lemma:

\begin{lemma} \label{areas} Let $\beta \in \pi_2(S_n, \f{T}_n)$, $\omega$ and $\sigma$ are symplectic forms on $S_n$ and $\f{C} \backslash{\text{Critv}(\Pi)}$ as above. Then we have: \[ \int_\beta \omega = \int_{\Pi(\beta)} \sigma \]
\end{lemma}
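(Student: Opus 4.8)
The plan is to observe that both $\beta\mapsto\int_\beta\omega$ and $\beta\mapsto\int_{\Pi(\beta)}\sigma$ are group homomorphisms $\pi_2(S_n,\f{T}_n)\to\f{R}$, and then to check that they agree on a convenient generating set. For the first map additivity is standard since $\omega=d\theta$ is closed (indeed $\int_\beta\omega=\int_{\partial\beta}\theta$). For the second, $\sigma$ is a (necessarily closed) $2$-form on $\f{C}\setminus\text{Critv}(\Pi_n)$, and $\int_{\Pi(\beta)}\sigma:=\int_{\Pi(\beta)\setminus\text{Critv}(\Pi_n)}\sigma$ is finite by the integrability of $\sigma$ recorded just above; one must also check that it depends only on the class $\Pi_*\beta\in H_2(\f{C},\gamma)$, which follows from $H_2(\f{C})=0$ together with an excision argument around the points of $\text{Critv}(\Pi_n)$ (delete small disks, apply Stokes on the complement, let the radii shrink to $0$). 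Granting this, both sides are well-defined homomorphisms and it suffices to test the identity on generators of $\pi_2(S_n,\f{T}_n)$.

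Using the split exact sequence $0\to\pi_2(S_n)\to\pi_2(S_n,\f{T}_n)\to\pi_1(\f{T}_n)\to 0$ recalled above, such a generating set is: the classes of the Lagrangian matching spheres generating $\pi_2(S_n)$; the class of the Lefschetz thimble bounding the vanishing cycle $V$, lifting the generator $[V]$ of one $\bfz$-summand of $\pi_1(\f{T}_n)$; and the class $\beta_0$ of the curve $c(t)$ above, lifting a generator of the other $\bfz$-summand because $\Pi\circ c$ has degree $1$ onto $\gamma$. On the first two families both sides vanish: on the symplectic side because a closed Lagrangian and a Lagrangian disk have zero $\omega$-area, as already noted; on the base side because matching spheres and thimbles project to one-dimensional arcs in $\f{C}$, so $\sigma$ pulls back to zero there (equivalently, $\Pi_*$ kills $\pi_2(S_n)$ since $\pi_2(\f{C})=0$).

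It remains to treat $\beta_0$, where one simply runs the two Stokes computations in parallel. On the symplectic side, $\int_{\beta_0}\omega=\int_c\theta=\int_\gamma\frac{i}{4}(z\,d\bar z-\bar z\,dz)+I$, by the computation preceding Lemma \ref{integrand}. On the base side, $\Pi(\beta_0)$ is represented by the region $G$ bounded by $\gamma$, so $\int_{\Pi(\beta_0)}\sigma=\int_G\sigma_0+\int_{G\setminus\text{Critv}(\Pi_n)}f^*(r\,dr\wedge d\theta)$; the second summand equals $I$ by Lemma \ref{integrand} and the remark following it, while the first equals $\int_\gamma\frac{i}{4}(z\,d\bar z-\bar z\,dz)$ by Stokes, since $d\bigl(\tfrac{i}{4}(z\,d\bar z-\bar z\,dz)\bigr)=\tfrac{i}{2}\,dz\wedge d\bar z=\sigma_0$. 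Hence the two sides agree on $\beta_0$ as well, and by linearity $\int_\beta\omega=\int_{\Pi(\beta)}\sigma$ for every $\beta$.

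The only genuinely delicate point is the bookkeeping around the singularities of $\sigma$ at $\text{Critv}(\Pi_n)$ — both in making $\int_{\Pi(\beta)}\sigma$ a well-defined function of the homology class and in the final Stokes step over $G$ — but this is exactly what the integrability statement recorded before the lemma provides; everything else is the short exact sequence together with two applications of Stokes' theorem.
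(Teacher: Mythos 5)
Your proposal is correct and follows essentially the same route as the paper: the paper's proof is a two-line remark that both integrals vanish on matching spheres and the thimble, and that the remaining case follows from "the computations above," which are exactly the parallel Stokes computations you carry out for the class $\beta_0$. You have simply made explicit the generating-set/homomorphism bookkeeping and the integrability of $\sigma$ across $\text{Critv}(\Pi_n)$ that the paper leaves implicit.
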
 
\begin{proof} Note that for the classes in $\pi_2(S_n, \f{T}_n)$ that are represented by matching spheres or a Lagrangian thimble both integrals vanish. The equality for any other class follows from the computations above.
\end{proof} 

Note that varying the path $\gamma$ outside of the critical value set of $\Pi$
leads to a Lagrangian isotopy of $\f{T}_n$.  We remark that this isotopy is
Hamiltonian if and only if it is exact (see \cite[Section 6.1]{Polterovich}),
which in the situation at hand is equivalent to the symplectic area of the disk
discussed above staying constant during the isotopy (as the other generator
still bounds the thimble during such an isotopy). Therefore, we define  \[ \tau_\gamma
:= \int_{\gamma} \frac{i}{4} ( z d\bar{z} - \bar{z} dz)   + 2\pi  \int_{0}^1 \alpha'(t) n(t)^2 dt \] which will be called the
$\emph{monotonicity constant}$.     Note that  $\tau_\gamma$ is the area of region enclosed by $\gamma$ with respect to $\sigma$. 
  
 \begin{lemma} \label{curves} Let $(\Sigma, \sigma=d \lambda)$ be an exact
 symplectic 2-manifold.  Then any two isotopic closed curves $\gamma_0,
 \gamma_1$ with $\int_{\gamma_0} \lambda = \int_{\gamma_1} \lambda$   are
 Hamiltonian isotopic via a compactly supported Hamiltonian isotopy.
 \end{lemma}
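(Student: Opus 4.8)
The plan is to combine the isotopy extension theorem with a Moser-type deformation, the hypothesis on the $\lambda$--periods entering only to make a single cohomological obstruction vanish. Write $\sigma=d\lambda$, put $a=\int_{\gamma_0}\lambda=\int_{\gamma_1}\lambda$, and fix a smooth isotopy $\gamma_s$, $s\in[0,1]$, of embedded circles from $\gamma_0$ to $\gamma_1$. By the isotopy extension theorem we realize it by a compactly supported ambient isotopy $\Psi_s$ of $\Sigma$ with $\Psi_0=\id$ and $\Psi_s(\gamma_0)=\gamma_s$, generated by a compactly supported time-dependent vector field $X_s$. Cartan's formula then gives
\[
\frac{d}{ds}\int_{\gamma_s}\lambda \;=\; \int_{\gamma_s}\iota_{X_s}\sigma,
\]
so that the period $p(s)=\int_{\gamma_s}\lambda$ is constant exactly when the $1$--form $\iota_{X_s}\sigma$ pulls back to an exact form on each circle $\gamma_s$. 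Observe also that any compactly supported Hamiltonian isotopy preserves $\lambda$--periods (the derivative above with $X_s=X_{H_s}$ is $\int_{\gamma_s}d(\iota_{X_{H_s}}\lambda+H_s)=0$), which both explains the hypothesis and shows the period is the only quantity that can obstruct the conclusion.

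The first step is a soft normalization, and I expect it to be the most delicate point to set up carefully: after replacing $\gamma_s$ by another isotopy with the same endpoints, we may assume $p(s)\equiv a$. Indeed, since $p(0)=p(1)=a$, one corrects the period by a ``finger move'' --- pushing a short arc of $\gamma_s$ to one side through an embedded strip of controlled $\sigma$--area --- distributing the total correction $a-p(s)$ smoothly over $s\in[0,1]$, so that only an infinitesimal push is required at each time and the curves stay embedded; the correction vanishes at $s=0,1$, so $\gamma_0$ and $\gamma_1$ are untouched. (When $\gamma_s$ is separating this is just an adjustment of the $\sigma$--area it bounds, which lies strictly between $0$ and $\int_\Sigma\sigma$.) From now on $p(s)\equiv a$, hence $\int_{\gamma_s}\iota_{X_s}\sigma=0$ for all $s$.

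Now comes the main construction. Choosing a smoothly varying tubular neighbourhood of the family $\gamma_s$ and using that the above integral vanishes, one builds a smooth family of compactly supported functions $H_s\colon\Sigma\to\mathbb{R}$ with $dH_s=\iota_{X_s}\sigma$ at every point of $\gamma_s$: the tangential part of this equation along $\gamma_s\cong S^1$ is solvable precisely because the integral is zero, the normal part is then prescribed directly, and one multiplies by a cutoff supported near $\gamma_s$. Let $\phi_s$ be the compactly supported Hamiltonian isotopy with $\phi_0=\id$ generated by $H_s$, so $\iota_{X_{H_s}}\sigma=dH_s$. By construction $X_{H_s}=X_s$ at every point of $\gamma_s$ (non-degeneracy of $\sigma$), hence the family $s\mapsto\gamma_s$, being transported by $X_s$, is also transported by $X_{H_s}$; uniqueness of solutions of ordinary differential equations gives $\phi_s(\gamma_0)=\gamma_s$, and in particular $\phi_1(\gamma_0)=\gamma_1$. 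This is the required compactly supported Hamiltonian isotopy.

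An alternative to the last two paragraphs, once the periods are normalized, is to subdivide the isotopy finely and carry out each short step inside a Weinstein neighbourhood $T^*S^1$ of $\gamma_{s_i}$: there a $C^\infty$--small circle with the same $\lambda$--period is the graph over the zero section of an exact $1$--form $dg$ with $g\colon S^1\to\mathbb{R}$, and the cutoff of the Hamiltonian $(q,p)\mapsto g(q)$ carries the zero section to that graph; composing the finitely many compactly supported Hamiltonian isotopies so obtained yields the same conclusion.
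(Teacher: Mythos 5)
Your argument is sound in its main construction but takes a genuinely different route from the paper's. The paper never normalizes the periods of the intermediate curves: it extends the isotopy of curves to a compactly supported ambient isotopy $G_t$, uses the period hypothesis exactly once --- to show that the class of $G_1^*\sigma-\sigma$ in $H^2_c(\Sigma,\nu(\gamma_0))$ vanishes, so that a Moser deformation corrects $G_1$ into a symplectomorphism --- then invokes a parametrized Moser argument (a compactly supported symplectomorphism smoothly isotopic to the identity is symplectically so) to produce a symplectic isotopy $K_t$ carrying $\gamma_0$ to $\gamma_1$, and finally observes that $t\mapsto K_t(\gamma_0)$ is an exact, hence Hamiltonian, isotopy because symplectomorphisms preserve the enclosed area. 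You instead fix the period of every intermediate curve first and then write down the generating Hamiltonians directly: solving $dH_s=\iota_{X_s}\sigma$ along $\gamma_s$ (tangential part solvable because $\int_{\gamma_s}\iota_{X_s}\sigma=\tfrac{d}{ds}p(s)=0$, normal part prescribed, then cut off) and arguing via tangency of the suspended vector field that the Hamiltonian flow transports $\gamma_0$ through the family. That second half is correct and is more elementary than the paper's, avoiding both Moser steps and the compactly supported cohomology computation; the price is that all the difficulty is concentrated in the normalization step.

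That step, which you rightly identify as the delicate one, is where your justification goes wrong. The claim that ``only an infinitesimal push is required at each time'' is not correct: at time $s$ the corrected curve must differ from $\gamma_s$ by a region of signed $\sigma$-area equal to the full finite deficit $a-p(s)$, so the finger at time $s$ is not small. What keeps the curves embedded is not smallness of the push but the fact that the finger is an embedded strip lying in a single complementary component of $\gamma_s$; the existence of such a strip of area $|a-p(s)|$ is precisely the content of your parenthetical (for separating curves both $a$ and $p(s)$ correspond to enclosed areas strictly between $0$ and $\int_\Sigma\sigma$, so the deficit is smaller than the area of whichever side must absorb it). One also has to make the family of fingers vary smoothly in $s$, including through the times where $a-p(s)$ changes sign and the push switches sides, and the non-separating case is not addressed at all. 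None of this is unfixable, but as written the step is a sketch resting on an incorrect reason, whereas the paper's strategy sidesteps the issue by only ever using the period condition at the endpoint.
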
 
 
 \begin{proof} This is an adaptation of Proposition A.1 in \cite{salamon} and
 follows a similar route - we first extend the isotopy connecting
 $\gamma_0$ to $\gamma_1$ to a global smooth isotopy such that it ends at a
 symplectomorphism, then use parametrised Moser's trick to get a symplectic
 isotopy connecting $\gamma_0$ to $\gamma_1$ (this works only in dimension 2) .
 Finally, this gives an exact Lagrangian, and hence Hamiltonian, isotopy
 between $\gamma_0$ and $\gamma_1$. The proof below follows this outline.
 
Let $f_t$ be an isotopy connecting $\gamma_0$ to $\gamma_1$, that is a map $f
\colon S^1\times [0,1] \to \Sigma$ such that $f_t = f_{S^1 \times \set{t} }$ is
an embedding. Without loss of generality, it suffices to restrict to a compact
submanifold of $\Sigma$ which contains the image of $f$ and in which $\gamma_0$
(and hence $\gamma_1$) is separating (in fact, $f_t$ can be taken to be supported near
embedded annuli with core $\gamma_0$ and bigons between $\gamma_0$ and
$\gamma_1$; it is easy to see that support of such an isotopy is contained in a
sub-annulus of $\Sigma$ with core $\gamma_0$). 

We first prove that $f_t$ extends to symplectic isotopy $F_t$ of a
neighbourhood of $\gamma_0$ (cf. Ex. 3.40 in \cite{McDS}). Write $\gamma_t =f_t(\gamma_0)$ and choose an increasing
sequence $t_k \in [0,1]$ starting with $t_0=0$ and ending with $t_N=1$, such
that $\gamma_{t_{k+1}}$ is in a Weinstein tubular neighbourhood of
$\gamma_{t_k}$ and for $t \in [t_k,t_{k+1} ]$, $\gamma_t$ are graphs of closed 1-forms $\mu_t$ on
$\gamma_{t_k}$. Then $(q, p) \mapsto (q,
p+\mu_t(q))$ is a symplectic isotopy extending $f_t$ on a neighbourhood of
$\gamma_{t_k}$. Taking the neighbourhood of $\gamma_0$ small enough that it
lands in the domain of definition of these extensions for all times $t \in
[0,1]$ gives the desired extension $F_t$.
 
The isotopy $F_t$ constructed above extends to a smooth compactly supported
isotopy of $\Sigma$ which coincides with $F_t$ on a smaller neighbourhood of
$\nu(\gamma_0)$ of $\gamma_0$. We denote this isotopy of $\Sigma$ by $G_t$. We next show that by a
further compactly supported smooth isotopy we can replace $G_t$ with an isotopy
$H_t$ such that $G_t$ and $H_t$ agree near $\gamma_0$ and $H_1$ is a
symplectomorphism.

Consider the compactly supported closed 2-form $G_1^* \sigma-\sigma$. Since
$G_1$ is a symplectomorphism near $\gamma_0$, $G_1^*\sigma -\sigma$ vanishes
identically near $\gamma_0$. Hence, it represents a class in $H^2_c(\Sigma ,
\nu(\gamma_0) )$. In addition, as $\gamma_0$  is separating, this last group is rank
2 corresponding to two connected pieces of $\Sigma \backslash \nu(\gamma_0)$ , say
$\Sigma_1$ and $\Sigma_2$. 

Now, by Stokes' theorem and since $G_t$ is compactly supported, we get
$\int_{\Sigma_1} (G_1^*(\sigma)-\sigma) = \int_{\gamma_0}
(G_1^*(\lambda)-\lambda) = \int_{\gamma_1} \lambda-\int_{\gamma_0} \lambda =0$, and
similarly the integral of $G_1^*\sigma-\sigma$ vanishes over $\Sigma_2$.
Therefore, we have $G_1^*\sigma-\sigma =d\alpha$ for some compactly supported
$1$-form $\alpha$ which vanishes near $\gamma_0$.
  
Hence we have a  family of symplectic forms $\sigma_s = (1-s) \sigma + s
G_1^*(\sigma)= \sigma + s d \alpha$ and Moser's trick yields the desired
isotopy. In summary, we have produces a compactly supported isotopy $H_t$ such
that $H_1$ sends $\gamma_0$ to $\gamma_1$ and is a symplectomorphism on
$\Sigma$.

We have the following lemma: 

\begin{lemma} \label{paramoser} A compactly supported symplectomorphism $H_1$
smoothly isotopic to identity via a compactly supported isotopy is isotopic to
identity via a compactly supported family of symplectomorphisms.  \end{lemma}
\begin{proof} This is a compactly supported version of Lemma A2 in
\cite{salamon}, and is an application of a parametrised Moser's trick.
\end{proof}
 
\emph{Completion of the proof of Lemma \ref{curves}}:  The Lemma \ref{paramoser}
applied to $H_t$ yields a symplectic isotopy $K_t$ connecting $\gamma_0$ to
$\gamma_1$. Finally, as the embedded surfaces bounded by $\gamma_t$ (namely
$K_t(\Sigma_1)$) all have the same area, $K_t(\gamma_0)$ is an exact Lagrangian
isotopy, and so $\gamma_0$ and $\gamma_1$ are Hamiltonian isotopic. \end{proof} 
 
Applying Lemma \ref{curves} to $(\Sigma=\bfc \setminus \text{Critv}(\Pi_n), \sigma)$ where $\sigma$ is as in Lemma \ref{areas}, we get the following:
 
\begin{corollary}\label{isotopy} Suppose embedded circles $\gamma_0$ and
$\gamma_1$ are isotopic in $\bfc \setminus \text{Critv}(\Pi_n)$ and
$\tau_{\gamma_0} = \tau_{\gamma_1}$. Then $\f{T}_{n, \gamma_0}$ and
$\f{T}_{n, \gamma_1}$ are Hamiltonian isotopic in $S_n$.  \end{corollary}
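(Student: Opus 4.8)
The plan is to run Lemma \ref{curves} in the base $(\bfc\setminus\text{Critv}(\Pi_n),\sigma)$ and then push the resulting Hamiltonian isotopy of circles up to $S_n$ through the Lefschetz fibration, invoking the remark preceding Lemma \ref{curves}: isotoping $\gamma$ away from the critical values produces a Lagrangian isotopy of $\f{T}_n$, and this isotopy is Hamiltonian precisely when the monotonicity constant $\tau_\gamma$ is held fixed.

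First I would check the hypothesis of Lemma \ref{curves}. Fix a primitive $\lambda$ of $\sigma$ on the planar surface $\bfc\setminus\text{Critv}(\Pi_n)$, and write $G_\gamma$ for the bounded region enclosed by an embedded circle $\gamma$. Since $\gamma_0$ and $\gamma_1$ are isotopic in $\bfc\setminus\text{Critv}(\Pi_n)$ they are homologous there, and since $H_2(\bfc\setminus\text{Critv}(\Pi_n))=0$ the number $\int_{\gamma_0}\lambda-\int_{\gamma_1}\lambda$ depends neither on the choice of $\lambda$ nor on a bounding chain. Evaluating it on the signed region swept between the two curves — which misses $\text{Critv}(\Pi_n)$ exactly because isotopic simple closed curves enclose the same critical values — and using that $\tau_\gamma$ is the $\sigma$-area of $G_\gamma$ (with $\sigma$ integrable across $\text{Critv}(\Pi_n)$), this number equals $\int_{G_{\gamma_0}}\sigma-\int_{G_{\gamma_1}}\sigma=\tau_{\gamma_0}-\tau_{\gamma_1}=0$. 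Hence Lemma \ref{curves} produces a compactly supported Hamiltonian isotopy $\psi_s$ of $\bfc\setminus\text{Critv}(\Pi_n)$ with $\psi_0=\id$ and $\psi_1(\gamma_0)=\gamma_1$.

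Next I would lift. Setting $\gamma_s=\psi_s(\gamma_0)$, a path of embedded circles in $\bfc\setminus\text{Critv}(\Pi_n)$, the remark gives that $\{\f{T}_{n,\gamma_s}\}$ is a Lagrangian isotopy of $\f{T}_n$ in $S_n$, and it is Hamiltonian as soon as $\tau_{\gamma_s}$ is constant. To see this constancy, extend $\psi_s$ by the identity to a diffeomorphism of $\bfc$; it fixes a neighbourhood of $\text{Critv}(\Pi_n)$, is isotopic to the identity, hence carries $G_{\gamma_0}$ onto $G_{\gamma_s}$ and fixes $\text{Critv}(\Pi_n)$, so $\tau_{\gamma_s}=\int_{G_{\gamma_0}\setminus\text{Critv}(\Pi_n)}\psi_s^{*}\sigma=\tau_{\gamma_0}$. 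Equivalently, one can lift $\psi_s$ itself: the Hamiltonian flow of $h_s\circ\Pi_n$, where $h_s$ generates $\psi_s$, covers $\psi_s$, commutes with symplectic parallel transport, and is supported away from the singular fibres, so it is a Hamiltonian isotopy of $S_n$ taking $\f{T}_{n,\gamma_0}$ to $\f{T}_{n,\gamma_1}$. Either way the corollary follows.

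The only genuinely delicate point — everything else being formal bookkeeping — is the interaction with $\text{Critv}(\Pi_n)$: the blow-up of $\sigma$ there must not obstruct identifying the period of $\lambda$ with $\tau$ (handled by integrability of $\sigma$ together with the swept region avoiding the critical values), and the lifted isotopy must be checked to be Hamiltonian on all of $S_n$ rather than merely over $\bfc\setminus\text{Critv}(\Pi_n)$ (handled by the compact support of $\psi_s$ in $\bfc\setminus\text{Critv}(\Pi_n)$). I expect no real difficulty there, so the corollary drops out of Lemma \ref{curves} and the preceding remark.
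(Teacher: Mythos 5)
Your proposal is correct and follows essentially the same route as the paper: apply Lemma \ref{curves} in $(\bfc\setminus\text{Critv}(\Pi_n),\sigma)$ --- using that $\tau_\gamma$ is the $\sigma$-area enclosed by $\gamma$ to check its period hypothesis --- and then observe that the induced Lagrangian isotopy of the matching tori is exact (Lemma \ref{areas} and the remark on exact isotopies), hence Hamiltonian. One caveat: your ``equivalently'' aside of lifting $\psi_s$ by the Hamiltonian $h_s\circ\Pi_n$ does not work as stated, since the projection of the Hamiltonian vector field of $h_s\circ\Pi_n$ at a point $e$ is the Hamiltonian vector field of $h_s$ with respect to the horizontal part of $\omega$ at $e$, which varies along the fibre and is not $\sigma$; that flow therefore neither covers $\psi_s$ nor commutes with parallel transport, so you should rely on your first (correct) argument.
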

\begin{proof} By Lemma $\ref{curves}$, $\gamma_0$ and $\gamma_1$ are
Hamiltonian isotopic, let $\gamma_t$ be the image of $\gamma_0$ in such an isotopy then by Lemma $\ref{areas}$ the Lagrangian isotopy of the corresponding matching tori $\f{T}_{n,\gamma_t}$ is exact.  \end{proof}

We also have the following obvious observation.

\begin{corollary} Suppose an embedded circle  $\alpha$ is entirely contained inside
an embedded circle $\beta$. Then $\tau_\alpha < \tau_\beta$.  \end{corollary}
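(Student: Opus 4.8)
The plan is to reduce everything to the observation, recorded in the paragraph defining $\tau_\gamma$, that $\tau_\gamma$ is precisely the $\sigma$-area of the planar region enclosed by $\gamma$, together with the fact that $\sigma$ dominates the standard area form. Write $G_\gamma \subset \f{C}$ for the open region bounded by the Jordan curve $\gamma$, so that $\tau_\gamma = \int_{G_\gamma}\sigma$; here $\text{Critv}(\Pi_n)$ may well lie in the interior of $G_\gamma$, but since it is a finite set and $\sigma$ is integrable across it this causes no trouble, $\int_{G_\gamma}\sigma = \int_{G_\gamma\setminus\text{Critv}(\Pi_n)}\sigma$.

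First I would record the pointwise estimate $\sigma \geq \sigma_0$ on $\f{C}\setminus\text{Critv}(\Pi_n)$: by construction $\sigma = \sigma_0 + f^*(r\,dr\wedge d\theta)$, and since $f$ is orientation preserving while $r\,dr\wedge d\theta$ is the standard non-negative area form on $\f{C}\setminus\{0\}$, the pullback $f^*(r\,dr\wedge d\theta)$ is a non-negative $2$-form. Next I would invoke the hypothesis: the assumption that $\alpha$ lies entirely inside $\beta$ means $\alpha \subset G_\beta$ and $\alpha \neq \beta$, so applying the Jordan curve theorem inside the disk $G_\beta$ gives $\overline{G_\alpha}\subset G_\beta$; consequently $U := G_\beta\setminus\overline{G_\alpha}$ is a nonempty open subset of $\f{C}$, and in particular has positive Lebesgue measure.

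Finally, combining these,
\[
\tau_\beta-\tau_\alpha \;=\; \int_{G_\beta}\sigma-\int_{G_\alpha}\sigma \;=\; \int_{G_\beta\setminus G_\alpha}\sigma \;\geq\; \int_{U}\sigma \;\geq\; \int_{U}\sigma_0 \;>\;0,
\]
where the last inequality holds because $\sigma_0$ is a genuine area form and $U$ has positive measure. There is no real obstacle here --- this is the \emph{obvious observation} advertised in the text --- and the only point deserving a sentence of care is precisely the behaviour of $\sigma$ near $\text{Critv}(\Pi_n)$, which is why I would isolate the estimate $\sigma\ge\sigma_0$ and the integrability statement at the outset.
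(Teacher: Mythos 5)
Your proof is correct and takes essentially the same route the paper intends: the corollary is stated as an ``obvious observation'' following the remark that $\tau_\gamma$ is the $\sigma$-area of the region enclosed by $\gamma$, and strict monotonicity of that area under inclusion of regions (using $\sigma\ge\sigma_0>0$ and integrability across $\text{Critv}(\Pi_n)$) is exactly the intended argument. Your write-up just makes explicit the positivity and integrability points that the paper leaves implicit.
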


\begin{remark}\label{round} A direct computation shows that if we take
$\gamma_r$ to be a circle of radius $r>1$ centred at the origin,
$\tau_{\gamma_r}$ approaches $m=\pi+ n+1$ as $r$ approaches $1$. As $\tau_r$
grows to infinity when $r$ grows,  any $\tau$ above $m$ can be obtained by
taking a circle of some unique radius.
\end{remark}

We next complete the computation of Maslov index on $\pi_2(S_n, \f{T}_n)$.

\begin{lemma}\label{mas} For $\beta \in \pi_2(S_n, \f{T}_n)$, the Maslov index $\mu(\beta)
= 2 ( \beta \cdot \Pi_n^{-1}(0))$.  \end{lemma}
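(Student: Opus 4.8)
The plan is to reduce the Maslov index computation to an intersection number with the fibre $\Pi_n^{-1}(0)$ by exploiting the fact that both sides are homomorphisms $\pi_2(S_n,\f{T}_n)\to\bfz$, and checking agreement on a generating set. Recall from the discussion preceding Lemma \ref{areas} that $\pi_2(S_n,\f{T}_n)$ is generated by (a) the classes of the matching spheres generating $\pi_2(S_n)$, (b) the thimble disk bounding the vanishing cycle $V\subset\f{T}_n$, and (c) one further class $\beta_0$ with $\Pi_n|_{\partial\beta_0}$ of degree $1$ onto $\gamma$. For the first two types of classes the Maslov index vanishes — the matching spheres are Lagrangian spheres with trivial normal bundle data (Maslov index $0$ as already noted), and the thimble is a Lagrangian disk so its relative Maslov index is $0$ — and simultaneously each such class has zero algebraic intersection with the regular fibre $\Pi_n^{-1}(0)$, since matching spheres and the thimble both project to arcs avoiding $0$. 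Hence both $\mu$ and $2(\,\cdot\,\cdot\Pi_n^{-1}(0))$ annihilate these generators, and it remains only to verify the formula on $\beta_0$.

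For $\beta_0$ I would use the standard trivialization of the canonical bundle of $S_n$. Since $c_1(S_n)=0$, there is a nowhere-vanishing holomorphic $2$-form $\Omega$ on $S_n$; concretely, writing $S_n=\{z^{n+1}+2xy=1\}$, one has the Poincaré residue form $\Omega = \mathrm{Res}\,\frac{dx\wedge dy\wedge dz}{z^{n+1}+2xy-1}$, which trivializes $K_{S_n}$ and, crucially, has neither zeros nor poles — in particular it does not detect the fibre $\Pi_n^{-1}(0)$. The Maslov index of $\beta_0$ is then computed as twice the winding number of $\Omega$ restricted to the Lagrangian plane field along $\partial\beta_0$. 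Parametrizing $\partial\beta_0$ by the curve $c(t)$ introduced before Lemma \ref{integrand} — which covers $\gamma$ once — one computes $\Omega$ along the tangent spaces to $\f{T}_n$ and finds that its argument winds by exactly the degree with which $\gamma(t)^{n+1}-1$ (equivalently $f(\gamma(t))$, in the notation of Lemma \ref{integrand}) winds around $0$ as $t$ traverses $[0,1]$. But $\gamma$ encircles the critical value set $\mathrm{Critv}(\Pi_n)$, which is exactly the $(n+1)$st roots of unity, i.e. the zero set of $z^{n+1}-1$; hence this winding number equals the number of roots of unity enclosed by $\gamma$, counted with multiplicity, which is precisely $\beta_0\cdot\Pi_n^{-1}(0)$. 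This gives $\mu(\beta_0)=2(\beta_0\cdot\Pi_n^{-1}(0))$, completing the check on generators.

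An alternative, perhaps cleaner route: decompose $\beta_0$ into pieces fibred over $\gamma$, using that $\f{T}_n$ is a union of vanishing cycles $V_x$ over $\gamma$. Over each point the vanishing cycle $V_x$ is a Lagrangian circle in the hyperboloid fibre with trivial Maslov data relative to the fibre; the total Maslov index of $\beta_0$ then comes entirely from the "monodromy" contribution of how the fibrewise Lagrangian planes rotate relative to $\Omega$ as one goes around $\gamma$, together with the base contribution from $\Pi_n(\beta_0)\subset\f{C}$. A Maslov-index-of-Lefschetz-thimbles type computation (as in \cite{book}, Chapter 16) shows the base contribution is twice the number of critical values of $\Pi_n$ enclosed by $\gamma$, which again is $2(\beta_0\cdot\Pi_n^{-1}(0))$.

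The main obstacle I anticipate is bookkeeping the trivialization carefully enough to pin down the winding number exactly, rather than up to an overall sign or an additive constant: one must be sure that the holomorphic volume form used is genuinely nowhere-zero on all of $S_n$ (so that it contributes no spurious winding), and that the conventions for Maslov index, orientation of $\gamma$, and the sign of the intersection number $\beta\cdot\Pi_n^{-1}(0)$ are mutually consistent. Fixing the orientation of $\gamma$ so that it encloses positive area (as stipulated in the definition of $\f{T}_{n;\gamma}$) makes the intersection number non-negative, and with that convention the two sides match on the nose. The remaining content — that $\mu$ really is additive over the chosen generating set and that the generators are as claimed — was already established in the text preceding Lemma \ref{areas}.
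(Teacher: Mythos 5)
Your reduction to generators is fine: both $\mu$ and $2(\,\cdot\,\cdot\,\Pi_n^{-1}(0))$ are homomorphisms on $\pi_2(S_n,\f{T}_n)$, both vanish on the sphere classes and on the thimble class, so everything comes down to one class $\beta_0$ whose boundary covers $\gamma$ once. The problem is that your evaluation on $\beta_0$ contains two errors. First, you identify ``the number of roots of unity enclosed by $\gamma$'' with $\beta_0\cdot\Pi_n^{-1}(0)$. But $\Pi_n^{-1}(0)$ is the \emph{regular} fibre over the origin, not the union of singular fibres: since $\Pi_n\circ u|_{\partial\beta_0}$ has degree $1$ onto $\gamma$ and $\gamma$ winds once around $0$, one has $\beta_0\cdot\Pi_n^{-1}(0)=1$, whereas the number of enclosed critical values is $n+1$. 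Second, the claimed winding of $\arg\Omega$ is also wrong. The residue form is $\Omega_{\mathrm{hol}}=\frac{dx\wedge dy}{(n+1)z^{n}}$; contracting with the vanishing-cycle direction $\xi=(ix,-iy,0)$ and the horizontal lift $X$ of $\dot\gamma$ gives $\Omega_{\mathrm{hol}}(\xi,X)=\frac{i\,d(2xy)(X)}{2(n+1)z^{n}}=-\frac{i}{2}\,dz(X)=-\frac{i}{2}\dot\gamma(t)$, whose argument winds once, not $n+1$ times. So the correct version of your computation gives $\mu(\beta_0)=2=2(\beta_0\cdot\Pi_n^{-1}(0))$, but the chain of equalities you actually wrote would give $\mu(\beta_0)=2(n+1)$ against $\beta_0\cdot\Pi_n^{-1}(0)=1$; this already fails for the Polterovich torus in $T^*S^2$ ($n=1$), whose minimal Maslov number is $2$. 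The same confusion between critical fibres and the fibre over $0$ invalidates your ``alternative route.''

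For comparison, the paper does not argue generator by generator: it reduces to round circles by isotopy invariance of the Maslov class, restricts the meromorphic form $\hat\Omega=\frac{dx\wedge dy}{2xy-1}$, which is nonvanishing with a first-order pole precisely along $D=\Pi_n^{-1}(0)$, checks that the round tori are special Lagrangian for it, and invokes Auroux's Lemma 3.1 to get $\mu(\beta)=2(\beta\cdot D)$ for all classes at once. There the fibre over $0$ enters for a structural reason, as the polar divisor of the volume form; in your argument it never genuinely appears and is only smuggled in through the misidentification with the critical values. Your approach is repairable by replacing the claimed winding with the computation above, but as written the key step is incorrect.
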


\begin{proof} Since Maslov index is invariant under Lagrangian isotopy, it
suffices to prove this formula for the matching tori above round circles
$\gamma_r$ (of any radius bigger than 1).  To this end, we construct a complex
meromorphic volume form $\Omega$ which is nowhere vanishing and has a pole of
order 1 along the divisor $D=\Pi_n^{-1}(0)$ in $S_n$, and with respect to
which $\f{T}_n$ is a special Lagrangian submanifold, i.e.
$\operatorname{Im}(\Omega)|_{\f{T}_n}=0$. Then \cite[Lemma 3.1]{Aur} states
that the Maslov index $\mu(\beta)$ is twice the algebraic intersection number
of $\beta$ with the divisor of $\Omega^{-1}$, that is with $\Pi_n^{-1}(0)$.

Such an $\Omega$ can be obtained by the restriction to $S_n$ of $\hat{\Omega}=
\frac{dx\wedge dy}{2xy-1}$ on $\bfc^3$.  Note that $S_n$ is cut out by the
equation $2xy-1=z^{n+1}$, hence on $S_n$ we have $2xdy+2ydx = (n+1) z^n dz$, so
that  $\frac{dx\wedge dy}{2xy-1}= \frac{(n+1)}{2yz} dz \wedge dy =
-\frac{(n+1)}{2xz} dz \wedge dx$. We see that on $S_n\setminus D$ the form
$\Omega$ is non-vanishing, and since $D$ is given by $z=0$ (and hence both
$x$ and $y$ are non-zero on $D$), $\Omega$ blows up to order 1 at $D$, as
wanted.

It remains to show that the round $\f{T}_n$ are special Lagrangian for $\Omega$. This is the same as in \cite[Proposition 5.2]{Aur}. Namely, we take the Hamiltonian function on $S_n$ given by $H(x,y,z)= |2xy-1|^2$ and consider its Hamiltonian vector field $X_H$. It is symplectically orthogonal to vertical tangent vectors because $H$  is constant on the fibres of $\Pi_n$ and is tangent to the level sets of $H$, that is to the fibres. So $X_H$ is the horizontal lift of the tangent vector of $\gamma_r$, and so is tangent to  $\f{T}_n$.  The tangent space to $\f{T}_n$ is spanned by $X_H$ and a vector field tangent to the vanishing cycle, say $\xi=(ix, -iy)$. Since $\iota_\xi \Omega = \frac{ixdy+iydx}{2xy-1}= i d \log (2xy-1)$, we get $\operatorname{Im} (\xi, X_H)=d \log |2xy-1|(X_H)$ which is zero because $X_H$ is tangent to the level set of $H$. Hence $\f{T}_n$ is special Lagrangian.  \end{proof} 

Recall that the minimal Maslov number of a Lagrangian $L$ in a symplectic
manifold $M$ is defined to be the integer $N_L := \text{min} \{ \mu(A) > 0 | A
\in \pi_2(M,L) \}$ where $\mu(A)$ is the Maslov index.

We summarize the above discussion as: 

\begin{proposition} $\f{T}_n$ is a monotone Lagrangian torus in $(S_n,
d\theta)$, that is \[ 2 \ \omega(A) = \tau \mu(A) \] for any $A \in \pi_2(S_n,
\f{T}_n)$ where $\tau >0$ is the monotonicity constant, a fixed real number
depending only on the path $\gamma$. The minimal Maslov index $N_{\f{T}_n}=2$.
\end{proposition}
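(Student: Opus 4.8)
\emph{Plan of proof.} The strategy is to assemble the computations of the preceding lemmas over an explicit set of generators of $\pi_2(S_n,\f{T}_n)$. Recall the exact sequence
\[ 0 \to \pi_2(S_n) \to \pi_2(S_n, \f{T}_n) \xrightarrow{\del} \pi_1(\f{T}_n) \to 0 , \]
with $\pi_1(\f{T}_n)\cong\bfz^2$. First I would fix the basis of $\pi_1(\f{T}_n)$ consisting of the class $[V]$ of the vanishing cycle together with the class $[c]$ of the loop $c(t)$ constructed above (which projects to $\gamma$ with degree $1$); that these span $\pi_1(\f{T}_n)$ is clear from the description of $\f{T}_n$ as the mapping torus of $\phi|_V$. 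Since $\del$ is surjective, one can choose $\beta_0\in\pi_2(S_n,\f{T}_n)$ with $\del\beta_0=[c]$, and one lets $T$ be the class of the Lefschetz thimble bounding $V$, so that $\del T=[V]$. Then $\pi_2(S_n,\f{T}_n)$ is generated by the image of $\pi_2(S_n)$, by $T$, and by $\beta_0$.

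Next I would tabulate the area and Maslov index of each generator. As already observed, every class in $\pi_2(S_n)$ has vanishing $\omega$-area (exactness of $d\theta$) and vanishing Maslov index (since $c_1(S_n)=0$). The thimble $T$ sits inside a single fibre $\Pi_n^{-1}(p)$ with $p\in\gamma$, $p\ne 0$, so $T\cdot\Pi_n^{-1}(0)=0$ and hence $\mu(T)=0$ by Lemma~\ref{mas}, while $\int_T\omega=\int_V\theta=0$. For $\beta_0$, Lemma~\ref{areas} identifies $\int_{\beta_0}\omega$ with the $\sigma$-area of the region enclosed by $\gamma$, that is with $\tau_\gamma=\int_\gamma\tfrac{i}{4}(z\,d\bar z-\bar z\,dz)+I$; the first summand is positive by the orientation convention on $\gamma$ and the second is positive by Lemma~\ref{integrand}, so $\tau:=\tau_\gamma>0$ and depends only on $\gamma$. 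Finally Lemma~\ref{mas} gives $\mu(\beta_0)=2(\beta_0\cdot\Pi_n^{-1}(0))$, and since $\Pi_n(\del\beta_0)=\gamma$ encircles the origin once, $\beta_0\cdot\Pi_n^{-1}(0)=1$, so $\mu(\beta_0)=2$.

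With this in hand the proposition follows formally. Given $A\in\pi_2(S_n,\f{T}_n)$, write $\del A=b[V]+k[c]$ in $\pi_1(\f{T}_n)$; then $A-bT-k\beta_0\in\ker\del=\operatorname{im}(\pi_2(S_n))$, so by additivity of $\omega$ and $\mu$ and their vanishing on $\pi_2(S_n)$ and on $T$ we obtain $\omega(A)=k\tau$ and $\mu(A)=2k$, whence $2\,\omega(A)=\tau\,\mu(A)$. Since every Maslov index is of the form $2k$ and the value $\mu(\beta_0)=2$ is attained, the minimal Maslov number is $N_{\f{T}_n}=2$.

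I expect no serious obstacle remains: the two inputs that carry weight — $\int_{\beta_0}\omega=\tau_\gamma$ and $\beta_0\cdot\Pi_n^{-1}(0)=1$ — are immediate from Lemmas~\ref{areas} and~\ref{mas} once $\beta_0$ is chosen with $\Pi_n|_{\del\beta_0}$ of degree exactly $1$ onto $\gamma$, and everything else is bookkeeping with the homotopy exact sequence. The only point requiring a line of care is that the chosen generators genuinely span $\pi_2(S_n,\f{T}_n)$, which is precisely the content of that exact sequence together with the surjectivity of $\del$ (guaranteed since $\pi_1(S_n)=0$).
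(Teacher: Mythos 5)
Your proposal is correct and follows essentially the same route as the paper, which likewise reads off monotonicity from the split exact sequence $0\to\pi_2(S_n)\to\pi_2(S_n,\f{T}_n)\to\pi_1(\f{T}_n)\to 0$, the vanishing of area and index on the matching spheres and on the thimble, the area formula of Lemma~\ref{areas} with positivity from Lemma~\ref{integrand}, and the index formula of Lemma~\ref{mas}. One small slip: a Lefschetz thimble is the union of vanishing cycles over a vanishing path, not a subset of a single fibre; but your conclusions $T\cdot\Pi_n^{-1}(0)=0$ and $\mu(T)=0$ still hold, since the vanishing path may be chosen to avoid the origin (or, as the paper argues, simply because the thimble is a Lagrangian disk).
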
 

For $n=0$, we get a monotone Lagrangian torus $\f{T}_0 \subset \f{C}^2$, which
is indeed the Clifford torus (\cite{Aur}) and for $n=1$, we have a monotone
Lagrangian torus $\f{T}_1 \subset T^* S^2$. The following proposition shows
that one of the  $\f{T}_1$ is Hamiltonian isotopic to Polterovich's construction of a
monotone Lagrangian torus in $T^* S^2$ (see \cite{AF}).  

\begin{proposition}  \label{ep} Under a  symplectomorphism identifying  $S_1$
and $T^* S^2$, Polterovich torus becomes one of the $\f{T}_{1}$.
\end{proposition}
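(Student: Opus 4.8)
The plan is to exhibit both Polterovich's torus and one of the $\f{T}_1$ as \emph{the same} matching torus inside a common model, and then invoke the Hamiltonian isotopy results already established (Corollary \ref{isotopy}, together with the monotonicity/area bookkeeping of Lemmas \ref{areas}--\ref{mas}). Recall that Polterovich's monotone torus in $T^*S^2$ is built as follows: equip $S^2$ with a rotationally symmetric metric, pick the circle $S^1 \subset S^2$ of some fixed latitude, and take the union of the unit conormal circles over it; equivalently, Polterovich's torus is the preimage of a single regular circle under the ``moment map'' (the Hamiltonian generating the rotation) on a fixed sphere cotangent bundle, or — in the Lefschetz-fibration picture — the matching torus of the vanishing cycle of the $A_1$ fibration $\Pi_1 : S_1 \to \f{C}$ over a suitably chosen loop encircling the unique critical value. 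So structurally, Polterovich's construction \emph{is} a matching-torus construction; the only thing to pin down is over which loop, and with which monotonicity constant.

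First I would make the identification of $S_1$ with $T^*S^2$ explicit and compatible with the fibration structures. The hyperboloid $S_1 = \{z^2 + 2xy = 1\}$ carries $\Pi_1$, a Lefschetz fibration over $\f{C}$ with two critical values at $z = \pm 1$, whose regular fibre is $\{2xy = 1-z^2\} \cong \f{C}^*$, an annulus with core the vanishing cycle $V$. On the other hand, $T^*S^2$ fibres over the disk as an $A_1$ Lefschetz fibration whose vanishing cycles over a path joining the two critical values glue to the zero section $S^2$. The standard symplectomorphism $S_1 \cong T^*S^2$ (carrying the exact symplectic form to the canonical one, as recalled in the references \cite{SK}, \cite{book}) intertwines these two fibrations up to isotopy, because the $A_1$ Milnor fibre has, up to deformation, a unique Stein structure and a unique such Lefschetz fibration (as discussed in Section \ref{section2}). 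Under this identification, the ``latitude circle'' conormal torus of Polterovich pushes forward to the matching torus $\f{T}_{1;\gamma_P}$ of $V$ over the loop $\gamma_P$ in the base which is the image of the latitude; this $\gamma_P$ is an embedded circle in $\f{C} \setminus \{\pm 1\}$ enclosing exactly one of the two critical values. (Here one uses that rotation of $S^2$ about its axis lifts to a Hamiltonian $S^1$-action on $T^*S^2$ whose reduced spaces match the $\f{C}^*$-fibres of $\Pi_1$; this is precisely the content of \cite[Proposition 5.2]{Aur} in the $n=1$ case, and the computation is essentially the one already carried out in the proof of Lemma \ref{mas}.)

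Next I would record that \emph{any} of the tori $\f{T}_{1;\gamma}$ over an embedded loop $\gamma$ enclosing a single critical value of $\Pi_1$ is Hamiltonian isotopic to a round one $\f{T}_{1;\gamma_r}$ with a prescribed monotonicity constant: by Remark \ref{round} and Corollary \ref{isotopy}, as $\gamma$ varies through such loops the torus $\f{T}_{1;\gamma}$ stays Hamiltonian isotopic inside $S_1$ precisely as long as $\tau_\gamma$ is held fixed, and $\tau_\gamma$ can be adjusted freely (in the allowed range above $m = \pi + 2$) by shrinking or enlarging $\gamma$ around its enclosed critical value. Monotonicity of Polterovich's torus forces its area class to be proportional to its Maslov class with a specific ratio; by Lemma \ref{mas} the Maslov class of $\f{T}_1$ is computed by intersection with $\Pi_1^{-1}(0)$, and by Lemma \ref{areas} the area class is computed by the $\sigma$-area enclosed by the loop. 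Matching these two ratios determines a unique allowed value of $\tau$, hence — by Remark \ref{round} — a unique round loop $\gamma_r$. Therefore the push-forward of Polterovich's torus and $\f{T}_{1;\gamma_r}$ are two matching tori over isotopic loops with equal monotonicity constants, and Corollary \ref{isotopy} finishes the proof.

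The main obstacle, and the step demanding the most care, is the first one: making the symplectomorphism $S_1 \cong T^*S^2$ genuinely compatible with the two $A_1$-fibrations \emph{and} verifying that Polterovich's original conormal-torus construction really does land on a matching torus over a \emph{single}-critical-value loop (as opposed to, say, a figure-eight or a loop encircling both critical values, which would give a different — and in fact non-monotone, or differently-framed — torus). Concretely one must check that the reduced spaces of the rotation $S^1$-action on $T^*S^2$ are annuli carrying the vanishing-cycle core, and that the latitude circle of $S^2$ maps to a simple loop around one branch point of the double-branched-cover description of the $\f{C}^*$-fibration; this is where one genuinely invokes \cite[Proposition 5.2]{Aur} (and its proof, mirrored in our proof of Lemma \ref{mas}). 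Everything after that is bookkeeping with the monotonicity constant and an appeal to Corollary \ref{isotopy}.
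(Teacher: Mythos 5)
There is a genuine gap, and in fact the structural guess at the heart of your argument is wrong. You defer the key step --- identifying where Polterovich's torus actually lands under an explicit symplectomorphism $S_1 \cong T^*S^2$ --- calling it ``the main obstacle,'' and then you assert that it lands on a matching torus over a loop encircling a \emph{single} critical value of $\Pi_1$. The paper does the opposite: it writes down the symplectomorphism concretely ($z=z_0$, $x=\frac{1}{\sqrt2}(z_1+iz_2)$, $y=\frac{1}{\sqrt2}(z_1-iz_2)$, followed by $(v,u)=(s|s|^{-1}, t|s|)$ on real and imaginary parts), traces the Polterovich torus through it point by point, and finds that it is \emph{exactly} the union of vanishing cycles over the ellipse with foci at $\pm 1$ and eccentricity $\sqrt{(\sqrt5-1)/2}$ --- a loop enclosing \emph{both} critical values. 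This is consistent with the rest of the paper: the tori $\f{T}_n$ are defined over loops enclosing all critical values (cf.\ Remark \ref{round}, where $\tau_{\gamma_r}\to \pi+n+1$ as $r\to 1^+$), and the subsequent Remark identifies $T$ with a round such torus because its monotonicity constant $2\pi$ exceeds $\pi+2$. A matching torus over a single-critical-value loop is a Clifford-type torus sitting in a ball (the $n=0$ situation, where $c(\f{T}_0)\neq 0$ and the Floer cohomology vanishes), which cannot be the Polterovich torus since the latter has nonvanishing Floer cohomology. So the step you postponed is not only the whole content of the proposition; your prediction of its outcome would lead to a false statement.

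A secondary error: you claim that ``monotonicity of Polterovich's torus forces its area class to be proportional to its Maslov class with a specific ratio'' and that ``matching these two ratios determines a unique allowed value of $\tau$.'' This is circular --- the ratio in question \emph{is} $\tau$, and every $\f{T}_{1;\gamma}$ over a loop enclosing both critical values is monotone, with $\tau$ ranging over a continuum. Monotonicity alone cannot single out a value of $\tau$; one must actually compute the symplectic area of a section-class disk, which the paper does at the end of its proof (the $z$-projection encloses area $\pi$ and each of the $x$- and $y$-projections encloses $\pi/2$, giving $\tau = 2\pi$). Only after that computation can Remark \ref{round} and Corollary \ref{isotopy} be invoked to place $T$ among the round matching tori.
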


\begin{proof} $S_1$ is given by  $\{z^2+ 2xy =1\}$ in $\bfc^3$. Under the exact
symplectomorphism $z=z_0, x=\frac{1}{\sqrt2} (z_1+i z_2), y=
\frac{1}{\sqrt2}(z_1-iz_2)$ it is taken to $C=\{(z_0, z_1, z_2)\in \bfc^3|
z_0^2+z_1^2+z_2^2=1  \}$.  We take $s_j$ and $t_j$ to be real and imaginary
parts of $z_j$ respectively, so that  $z_j=s_j+ i t_j$, and let $s=(s_0,s_1,
s_2)$ and $t=(t_0,t_1, t_2)$.  Note that the equations for $S_1$ are
$|s|^2-|t|^2=1$ and $\langle s,t \rangle=0$.

Further,  we take $T^* S^2= \{v \in \bR^3, u \in \bR^3 | \langle v,u \rangle
=0, |v|=1 \}$. It is exact symplectomorphic to  $C$ via  the map  $(v=s
|s|^{-1},  u= t |s|)$.

For a complex number written in polar form $r e^{i\theta}$ we say that $\theta$
is its phase and we note that a vanishing cycle over the point $z=z_0$ given by
$|x|=|y|$ can be alternatively described by the condition that phase of $z_1$
is equal to phase of $z_2$ modulo $\pi$.

The Polterovich torus $T$ is the geodesic flow of unit covectors over $(1,0,0)$
(\cite{AF}). Namely, let $v=(v_0, v_1, v_2)$ be a point in $S^2$. If $v$ is
neither the north nor the south pole, there are exactly two points in $T$
projecting to $v$. To find them, denote $\vec{r}= (v_1, v_2)$, $r=|\vec{r}|$,
so that $v=(v_0, \vec{r})$. Then the cotangent vectors in the torus $T$
projecting to $v$ are $u=(-r, \frac{v_0}{r} \vec{r})$ and $-u=(r,
-\frac{v_0}{r} \vec{r})$.

Let's find the coordinates  $(s, t)$ of the point in $S_1$ corresponding to
$(v, u)$. Since $|u|=|s||t|=1$, we have $|s|^2-|s|^{-2}=1$, so $|s|=
\sqrt{\frac{\sqrt{5}+1}{2}}$, so that $s = \sqrt{\frac{\sqrt{5}+1}{2}} v$ and
$t=\sqrt{\frac{\sqrt{5}-1}{2}} u$. Note that this means $z_1$ and $z_2$ have
the phases that are either equal (if $v_1$ and $v_2$ have the same sign), or
differing by $\pi$.  The point $(\hat{z}_1, \hat{z}_2)$ corresponding to $(v,
-u)$ has real part $\sqrt{\frac{\sqrt{5}+1}{2}} v$ and imaginary part
$-\sqrt{\frac{\sqrt{5}-1}{2}} u$, and $\hat{z}_1$ and $\hat{z}_2$ also have
phases equal or differing by $\pi$.   As $(v_1, v_2)$ varies over a circle, the
points $(v, u)$ and $(v, -u)$ trace out the vanishing cycles over
$z_0=(\sqrt{\frac{\sqrt{5}+1}{2}} v_0, \sqrt{\frac{\sqrt{5}-1}{2}}  r)$ and
$\hat{z}_0=(\sqrt{\frac{\sqrt{5}+1}{2}} v_0, -\sqrt{\frac{\sqrt{5}-1}{2}}  r)$.
We note that the circles that are intersections of the Polterovich torus with
cotangent fibres over the north and south poles are vanishing cycles over the
points $(\sqrt{\frac{\sqrt{5}+1}{2}}, 0)$ and $(-\sqrt{\frac{\sqrt{5}+1}{2}},
0)$.

Hence the Polterovich torus is in fact the union of vanishing cycles over the ellipse 
$z=(\sqrt{\frac{\sqrt{5}+1}{2}} v_0, \sqrt{\frac{\sqrt{5}-1}{2}}  r)$. Note
that $v_0^2+ r^2=1$ means that the curve over which we have the matching torus
is the ellipse focal at $\pm1$ and with eccentricity
$\sqrt{\frac{\sqrt{5}-1}{2}}$.  This curve can be lifted to the torus $T$ as
before with $x(t)=y(t)=\frac{1}{\sqrt{2}} (1-\gamma(t)^{2})$, which can be
computed to be the same ellipse scaled down by $\sqrt{2}$, hence both $x$ and
$y$ projections enclose area $\frac{\pi}{2}$, giving the monotonicity constant
$\pi+\frac{\pi}{2}+\frac{\pi}{2}=2\pi$, as expected. \end{proof}

\begin{remark} As $2\pi > \pi+2$, Remark~\ref{round} and Lemma~\ref{isotopy}
imply that $T$ is Hamiltonian isotopic to one of the round matching tori.
\end{remark}

\subsection{Floer cohomology of matching tori}

Since $\f{T}_n$ is monotone of minimal Maslov index 2, its self-Floer
cohomology is well-defined and can be computed using the pearl complex. This
complex was first described by Oh in $\cite{ohpearl}$ (see also Fukaya
\cite{Fukaya}) and was studied extensively in the work of Biran and Cornea
(see \cite{BC0}, \cite{BC1} for detailed accounts).  

Before proceeding to the computation proper, we shall give a brief review of
the pearl complex.  We generally follow \cite{BC0} and \cite{BC2} to which the
reader is referred for details, however we will adapt the conventions of Floer \emph{cohomology}, rather than Floer homology (see also
\cite{sheridan}). 

Given a monotone Lagrangian $L$ inside a geometrically bounded symplectic
manifold $M$ (Stein manifolds in particular are geometrically bounded, see
\cite[Section 2]{CGK} for a definition and discussion), the pearl complex of
$L$ is a deformation of its Morse complex by quantum contributions coming from
holomorphic disks with boundary on $L$. In order to define this complex, we
take the coefficient ring to be the Laurent polynomials
$\La=\f{Z}_2[t,t^{-1}]$, fix a Morse function $f$ on $L$ with set of critical
points $\Crit(f)$, a Riemannian metric $\rho$ on $L$, and an almost-complex
structure $J$ on $M$ compatible with our symplectic form $\omega$. The pearl
complex has the underlying vector space $\mathcal{C}^* (L;f,\rho,J)=(\f{Z}_2
\langle \Crit(f) \rangle \otimes \Lambda)$, which inherits a relative
$\f{Z}$-grading coming from the Morse index grading on  $\f{Z}_2 \langle
\Crit(f)\rangle$ and the grading given on $\La$ by $\operatorname{deg} t= N_L$.

We define a differential on $\mathcal{C}^*(L;f,\rho,J)$ by counting pearls -
sequences of gradient flowlines of $f$ interspersed with holomorphic disks.
Namely, denote by  $\Phi_t$, $-\infty \leq t \leq \infty$ the gradient flow of
$(f, \rho)$. Given a pair of points $x, y \in L$ and a class $0 \neq A \in
H_2(M,L)$ consider for all $l \geq 0$ the sequences $(u_1, \ldots, u_l)$ of
non-constant J-holomorphic maps $u_i : (\f{D}, \partial \f{D}) \rightarrow (M,
L)$ with \begin{enumerate}

\item gradient trajectory of possibly infinite length $t'$ from $x$ to $u_1$
i.e.  $\Phi_{t'}(x)=  u_1(-1)$ \item gradient trajectories of length $t_i$
between $u_i$ and $u_{i+1}$ i.e. $\Phi_{t_i}(u_i(1)) = u_{i+1}(-1)$ \item
gradient trajectory of possibly infinite length $t''$ from $u_l$ to $y$  i.e.
$\Phi_{t''}(u_l(1))= y$ \item $[u_1] + \cdots [u_l] = A$ \end{enumerate} Two
such sequences $(u_1, \ldots, u_l)$ and $(u'_1, \ldots, u'_{l'})$ are
equivalent if $l=l'$ and each $u'_i$ is obtained from $u_i$ by precomposing
with holomorphic automorphism of $\f{D}$ that fixes $1$ an $-1$.  We define the
moduli space $\mathcal{P}_{\textnormal{prl}}(x,y;A;f,\rho,J)$ to be the space
of such sequences modulo equivalence.  In addition, for $A=0$  we define
$\mathcal{P}_{\textnormal{prl}}(x,y;A;f,\rho,J)$ to be the space of
unparametrized trajectories of the gradient flow $\Phi_t$ from $x$ to
$y$.  If $x$ and $y$ are critical points of $f$, then the expected dimension
$\delta_{\textnormal{prl}}(x,y;A)$ of
$\mathcal{P}_{\textnormal{prl}}(x,y;A;f,\rho,J)$ is $|y| - |x| + \mu(A)-1$. 

\begin{theorem}[{c.f. \cite[Theorem 2.1.1]{BC0}}] 

For a generic choice of the triple $(f,\rho,J)$ we have:

\begin{itemize} 

\item For all $x,y \in \Crit(f)$ and $A \in  H_2(M,L)$ such that
$\delta_{\textnormal{prl}}(x,y;A)=0$, the moduli space
$\mathcal{P}_{\textnormal{prl}}(x,y;A;f,\rho,J)$ is a finite number of points
and we can define $d(x)=\sum_{y,A} \bigl(  \#_{\mathbb{Z}_2}
\mathcal{P}_{\textnormal{prl}}(x,y;A;f,\rho,J)  \bigr) t^{\frac{\mu(A)}{N_L}}
y$.

\item Extending $d$ to  $\mathcal{C}^*(L;f,\rho,J)$ linearly over $\La$ we get a
chain complex (i.e. $d^2=0$), and the homology of this chain complex is
independent of the choices of $J, f, \rho$.

\item  There is a canonical (graded) isomorphism $H^{\ast}(\mathcal{C}^*(L;f,\rho,J))\to HF^{\ast}(L; \Lambda)$. 
   \end{itemize}

\end{theorem}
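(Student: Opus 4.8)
The plan is to obtain all three bullets from the corresponding theorems of Biran--Cornea, the only genuine modification being the bookkeeping needed to pass from their homological conventions to cohomological ones. First I would record that the ambient manifold is geometrically bounded (being Stein) and that $\f{T}_n$ has been shown above to be monotone with minimal Maslov number $N_{\f{T}_n}=2$, so that the hypotheses of \cite{BC0}, \cite{BC1} are in force. In particular, monotonicity together with $N_L\geq 2$ excludes the only dangerous bubbling phenomenon, namely non-constant holomorphic disks of Maslov index $\leq 0$, which is precisely what makes the pearly moduli spaces behave well.

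For the first bullet I would invoke the transversality package of Biran--Cornea: for a generic triple $(f,\rho,J)$ every pearly space $\mathcal{P}_{\textnormal{prl}}(x,y;A;f,\rho,J)$ is a smooth manifold of the expected dimension $\delta_{\textnormal{prl}}(x,y;A)=|y|-|x|+\mu(A)-1$. When this number is $0$, Gromov compactness together with the absence of Maslov-$\leq 0$ disk bubbles shows the space is compact, hence a finite set, so the mod-$2$ count $d(x)=\sum_{y,A}\bigl(\#_{\f{Z}_2}\mathcal{P}_{\textnormal{prl}}(x,y;A;f,\rho,J)\bigr)\, t^{\mu(A)/N_L}\,y$ is a well-defined element of $\mathcal{C}^*(L;f,\rho,J)$. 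Since here $d$ \emph{raises} the Morse grading by one we are simply reading the Biran--Cornea boundary operator in the opposite direction; as their transversality statements are symmetric in $x$ and $y$, this costs nothing.

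For $d^2=0$ I would run the standard argument: when $\delta_{\textnormal{prl}}(x,y;A)=1$ the relevant moduli space is a $1$-manifold whose Gromov--Floer compactification is a compact $1$-manifold with boundary, and its boundary points are exactly the once-broken pearly configurations that compute the $t^{\mu(A)/N_L}$-coefficient of $\langle d^2x,y\rangle$; a compact $1$-manifold has an even number of boundary points, so this coefficient vanishes mod $2$. Independence of the resulting homology from $(f,\rho,J)$ follows, as in \cite{BC0}, by a continuation argument: a generic path of data produces $1$-dimensional cobordism spaces whose count yields a chain homotopy equivalence between the two pearl complexes.

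Finally, for the third bullet the canonical isomorphism with $HF^*(L;\Lambda)$ is the comparison theorem of Biran--Cornea identifying the pearl complex with the Floer complex; it is realized by a PSS-type chain map that counts ``half-pearly'' objects (a Morse half-trajectory capped off by a Floer disk/strip configuration), which is a quasi-isomorphism and, once $\deg t=N_L$ is imposed, respects the relative $\f{Z}$-grading. In our cohomological normalization this is just the dual of their statement. The main obstacle, which however is entirely carried out in the references, is the transversality for pearly trajectories: one must choose a \emph{single} generic $(f,\rho,J)$ making all the moduli spaces that occur in the definition of $d$, in the proof of $d^2=0$, and in the invariance argument simultaneously regular. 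In the monotone, minimal-Maslov-$2$ setting this is achieved by the usual device of restricting to simple disks and allowing generic domain-dependent perturbations of $J$, so no input beyond \cite{BC0}, \cite{BC1}, \cite{BC2} is required.
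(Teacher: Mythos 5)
The paper does not prove this statement at all: it is quoted directly from Biran--Cornea (Theorem 2.1.1 of \cite{BC0}) with only the cosmetic change to cohomological conventions, and your outline is precisely the standard argument from that reference (transversality and compactness for pearly spaces, $d^2=0$ via boundaries of $1$-dimensional moduli, continuation for invariance, PSS-type comparison with $HF^*$), so you are taking essentially the same approach. The one small inaccuracy --- Biran--Cornea achieve transversality with a genuine (domain-independent) $J$ using simplicity of low-index disks \`a la Lazzarini, not domain-dependent perturbations, as the paper's remark following the theorem makes explicit --- is harmless since you defer those details to \cite{BC0} anyway.
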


\begin{remark} Let us make a remark on the requirement in the above theorem that the triples
$(f,\rho,J)$ be generic. What we require is to have the pair $(f, \rho)$ be
Morse-Smale, making all stable and unstable manifolds of $f$ transverse and
hence making the moduli spaces of gradient trajectories smooth, and to have $J$
that makes moduli of holomorphic disks with boundary on $L$ and two boundary
marked points regular; in addition we require all the evaluation maps from the moduli
spaces above into our symplectic manifold $M$ to be transverse in tuples, so
that the corresponding moduli spaces
$\mathcal{P}_{\textnormal{prl}}(x,y;A;f,\rho,J)$  are transversally cut out.
Note that only the moduli spaces that appear in building
$\mathcal{P}_{\textnormal{prl}}(x,y;A;f,\rho,J)$ with expected dimension
$\delta_{\textnormal{prl}}(x,y;A) \leq 1$ need to be regular. What we will use
in our computation is a complex structure that is regular for disks of Maslov
index 2 and a generic Morse-Smale function; this is sufficient for a monotone
two-dimensional Lagrangian torus.
\end{remark}
\begin{remark} $HF^*(L; \Lambda)$ is a unital (associative) ring with a
relative $\f{Z}$-grading, where the ring structure is given by counting
pseudoholomorphic triangles of Maslov index zero. We can indeed fix an absolute
$\f{Z}$-grading by requiring that the unit lies in $HF^0(L; \Lambda)$.
Similarly, the relative $\f{Z}$-grading at the chain level
$\mathcal{C}^*(L;f,\rho,J)$ can be upgraded to an absolute $\f{Z}$-grading by
requiring that the generators of Morse index $0$ lie in degree 0. Finally,
note that equivalently we could have worked with $HF^*(L; \f{Z}_2)$ by setting
$t=1$ in the definition of the chain complex. We then only get $\f{Z}/
N_L$-grading. On the other hand, $HF^*(L;\f{Z}_2)$ and $HF^*(L; \Lambda)$ carry
the same information since $HF^*(L; \Lambda)$ is $N_L$ periodic in the sense
that $HF^{*+N_L}(L;\Lambda) = t \cdot HF^*(L;\Lambda)$.  \end{remark} 
\begin{remark} The Lagrangians $\f{T}_n$ are tori, hence they are orientable and can be equipped
	with spin structures. This would allow us to take $\Lambda= \f{Z}[t,t^{-1}]$ as
	our coefficient ring. Doing so would require picking orientations and
	spin structures on $\f{T}_n$, and paying attention to the induced
	orientations of moduli spaces of discs in Floer cohomology
	computations. We avoid this refinement as it is not needed for our application.
\end{remark}
Let $L$ be a monotone Lagrangian in $(M,\omega)$ with minimal Maslov number
$N_L \geq 2$. In this case, following Biran and Cornea (Section 6.1.1 \cite{BC0}), we define a homology class $c(L) \in
H_1(L;\f{Z}_2)$ as follows: Let $J$ be an $\omega$-compatible almost complex
structure such that all the holomorphic disks of Maslov index $2$ are
regular (call such $J$ regular). Monotonicity ensures that there are only
finitely many homology classes in $H_2(M,L)$ represented by a holomorphic disk
and an application of a lemma of Lazzarini (\cite{lazzarini}) shows that all such disks are
simple. Thus, the set of regular $J$ is of second category in the space of
compatible almost complex structures. Pick a (generic) point $p \in L$ such
that the number of Maslov index $2$ holomorphic disks $u: (\f{D}, \partial
\f{D}) \to (M,L)$ with $p \in u(\partial \f{D})$ is finite, call this number
$l$. Then the boundaries of these holomorphic disks represent homology classes (counted with multiplicity) 
$c_1, \ldots, c_l \in H_1( L; \bfz_2)$ and the homology class $c(L)$ is simply the
sum $c(L) = \sum_{i=1}^l c_l$. Standard cobordism arguments show that $c(L)$ is independent of $J$ and $p$. 

The pearl complex model for self-Lagrangian Floer cohomology admits a degree
filtration as follows: $\mathcal{F}^k (\mathcal{C}^* (L;f,\rho,J)) =(\f{Z}_2
\langle \Crit(f) \rangle \otimes \mathcal{F}^k (\f{Z}_2[t,t^{-1}]))$, where
$\mathcal{F}^k \f{Z}_2[t,t^{-1}] = \{ P \in \f{Z}_2[t,t^{-1}] | P(t) = a_k t^k
+ a_{k+1} t^{k+1} + \ldots \}$. The differential clearly respects this
filtration and the degree preserving part corresponds to pearly trajectories
with $A=0$, which are indeed Morse trajectories. Therefore, one obtains a
spectral sequence from $H^{\ast}(L;\f{Z}_2)$ to $HF^*(L;\f{Z}_2)$ (this is
known as Oh's spectral sequence \cite{oh}). Biran and Cornea's careful analysis
of the algebraic structure of this spectral sequence shows that in our situation the class $c(\f{T})$ completely determines the Floer cohomology $HF^*(\f{T}; \bfz_2)$ additively, which we record as follows:
 \begin{proposition}[{\cite[Proposition 6.1.4]{BC0}}] \label{toriQH}
Let $\f{T}$ be a monotone Lagrangian 2-torus in a symplectic 4-manifold
$(S, \omega)$ with minimal Maslov number $N_{\f{T}} \geq 2$ .\\ If $c(\f{T}) = 0$, then $HF^* (\f{T}; \bfz_2) \simeq H^* ( \f{T}; \bfz_2)$ (as $\bfz_2$-graded vector spaces).\\ Conversely, if $c(\f{T}) \neq 0$, then $HF^* (\f{T}; \bfz_2)=0$.  
\end{proposition}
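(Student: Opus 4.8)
\emph{Proof proposal.} This is \cite[Proposition 6.1.4]{BC0}; we recall the strategy, which is an explicit analysis of the pearl complex together with Oh's spectral sequence. Fix a perfect Morse function $f$ on $\f{T} \cong T^2$, with a single minimum $e_0$, two index-one critical points $e_1, e_2$, and a single maximum $e_3$, and a generic metric $\rho$. Since $f$ is perfect, the Morse differential vanishes over $\bfz_2$, so on the $E_1$-page of the spectral sequence associated to the filtration $\mathcal{F}^k$ we have $H^*(T^2;\bfz_2) \otimes \Lambda$, and the pearl differential $d$ on $\mathcal{C}^*(\f{T};f,\rho,J)$ reduces to its quantum part (pearly trajectories with $A \neq 0$).

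Next I would carry out the degree bookkeeping. Writing $\deg(e_i t^k) = |e_i| + 2k$ and recalling $\delta_{\textnormal{prl}}(x,y;A) = |y|-|x|+\mu(A)-1$, one checks that the only pearly configurations of index $0$ with $A \neq 0$ are those running from $e_1$ or $e_2$ to $e_0$, and those running from $e_3$ to $e_1$ or $e_2$, each carrying exactly one disk of Maslov index $2$; here one uses $N_{\f{T}} = 2$, $\dim_{\bR}\f{T} = 2$, monotonicity (to bound the disk classes) and Lazzarini's lemma (all such disks are simple for generic $J$, so the regular $J$ form a set of second category). Hence $d e_i = c_i\, e_0 t$ for $i=1,2$ and $d e_3 = \epsilon_1 e_1 t + \epsilon_2 e_2 t$ with $c_i, \epsilon_i \in \bfz_2$ the relevant pearl counts, all other components of $d$ vanish, and $d$ is $\Lambda$-periodic; imposing $d^2=0$ forces $\epsilon_1 c_1 + \epsilon_2 c_2 = 0$.

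The key step, which is the heart of \cite{BC0}, is to identify these counts with the class $c(\f{T}) \in H_1(\f{T};\bfz_2)$. A standard cobordism argument comparing a generic base point $p \in \f{T}$ (used to define $c(\f{T})$) to points swept out along the descending manifold of $e_3$, respectively into the ascending manifolds of $e_1, e_2$, shows that the vectors $(\epsilon_1,\epsilon_2)$ and $(c_1,c_2)$ are both read off from $c(\f{T})$ in suitable bases of $H_1(\f{T};\bfz_2)$: in particular all four numbers vanish if and only if $c(\f{T}) = 0$, and if $c(\f{T}) \neq 0$ then at least one $\epsilon_i$ and at least one $c_j$ is nonzero. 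I expect this identification --- tracking precisely how the boundaries of the Maslov-$2$ disks meet the Morse flow data --- to be the main obstacle; everything else is formal.

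Granting it, the conclusion is immediate. If $c(\f{T}) = 0$ then $d = 0$, so $HF^*(\f{T};\bfz_2) \cong H^*(T^2;\bfz_2)$ as $\bfz_2$-graded vector spaces. If $c(\f{T}) \neq 0$, unwind the $\Lambda$-periodic complex degree by degree: in degree $0$ one has $\mathcal{C}^0 = \bfz_2\langle e_0, e_3 t^{-1}\rangle$ with $d(e_0)=0$, $d(e_3 t^{-1}) = \epsilon_1 e_1 + \epsilon_2 e_2$, and $\mathrm{im}\,(d\colon \mathcal{C}^{-1}\to\mathcal{C}^0) = \bfz_2\langle c_1 e_0, c_2 e_0\rangle$, so since some $\epsilon_i$ and some $c_j$ are nonzero we get $\ker(d|_{\mathcal{C}^0}) = \bfz_2\langle e_0\rangle = \mathrm{im}(d|_{\mathcal{C}^{-1}})$, i.e. $H^0 = 0$; by periodicity and the analogous computation in degree $1$ the whole complex is acyclic, so $HF^*(\f{T};\bfz_2) = 0$. $\qed$
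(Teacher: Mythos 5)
The paper does not actually prove this proposition itself: it is quoted verbatim from Biran--Cornea \cite[Proposition 6.1.4]{BC0}, and the remark that follows it in the paper only sketches one direction, namely that for a Morse function with a unique maximum $m$ one has $\partial(m)=PD(c(L))\cdot t$, so $c(L)=0$ forces $m$ to survive. Your reconstruction --- the pearl complex for a perfect Morse function on $T^2$, the degree bookkeeping showing only Maslov-$2$ disks contribute, the identification of the quantum counts $(c_1,c_2)$ and $(\epsilon_1,\epsilon_2)$ with pairings against $c(\f{T})$, and the resulting dichotomy including the acyclicity computation when $c(\f{T})\neq 0$ --- is correct and follows essentially the same route as the cited proof and the paper's accompanying remark.
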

\begin{remark} If we only wanted to show $HF^*(L;\f{Z}_2) \neq 0$ when
$c(L)=0$, we could argue as follows: We pick a Morse function $f$ on $L$
with a unique maximum, call it $m$. Since it represents a generator for
$H^2(L)$, for degree reasons, it will survive in $HF^*(L; \f{Z}_2)$ if
$\partial (m)=0$. On the other hand, the knowledge of Maslov index $2$
disks through $m$ allows us to compute $\partial(m) = PD(c(L) ) \cdot t $
where $PD(c(L) )$ is a chain consisting of linear combinations of index $1$
critical points of $f$ representing the Poincar\'e dual of $c(L) \in
H_1(L)$. Hence, if $c(L) =0$, then $m$ represents a non-trivial class in $HF^*(L;
\f{Z}_2)$. Proposition \ref{toriQH} shows that when $L$ is a torus, this is actually equivalent to $HF^*(L;\f{Z}_2) \simeq H^*(L; \f{Z}_2)$ \end{remark}

In view of Proposition \ref{toriQH}, we determine the Floer cohomology of the tori $\f{T}_n$ via a calculation of $c(\f{T}_n)$.
\begin{lemma} \label{sections} $c(\f{T}_0) \neq 0$ and $c(\f{T}_n)=0$ for $n>0$.
\end{lemma}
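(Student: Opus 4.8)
The plan is to compute the class $c(\f{T}_n) \in H_1(\f{T}_n; \f{Z}_2)$ directly by enumerating Maslov index $2$ holomorphic disks with boundary on $\f{T}_n$ through a generic point, using the explicit structure of the Lefschetz fibration $\Pi_n$. By Corollary \ref{isotopy} and Remark \ref{round}, $c(\f{T}_n)$ depends only on the Hamiltonian isotopy class, so we may work with a round matching torus $\f{T}_n = \f{T}_{n,\gamma_r}$ over a circle $\gamma_r$ of radius $r>1$, which is invariant under the obvious $S^1$-rotation in the $x,y$ coordinates. First I would note that by Lemma \ref{mas} a Maslov index $2$ class $\beta$ satisfies $\beta \cdot \Pi_n^{-1}(0) = 1$; and by Lemma \ref{areas} its symplectic area equals $\int_{\Pi(\beta)} \sigma$, which for a minimal class is the $\sigma$-area of the disk in $\bfc$ bounded by $\gamma_r$ minus a small loop around one critical value — by monotonicity all index $2$ disks have the same (small) area, hence their $z$-projections are small disks near a single critical value of $\Pi_n$. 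This localizes the problem: each index $2$ disk must have $z$-projection a disk hitting $0$ exactly once and staying close to one of the roots of unity, so essentially we are looking at disks in a neighborhood of a single critical fibre together with the behavior of parallel transport of the vanishing cycle.

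The key computation is then local, near one critical value, where $S_n$ looks like the $A_1$ case (a neighborhood of a nodal fibre), and the torus $\f{T}_0 \subset \f{C}^2$ is the Clifford torus. For $\f{T}_0$ I would cite, or reprove by the standard explicit description, that the Clifford torus in $\f{C}^2$ bounds exactly the two Maslov index $2$ disks $\{z_1 = c_1\}$ and $\{z_2 = c_2\}$ (up to the $S^1\times S^1$ action), whose boundaries are the two generators of $H_1(\f{T}_0)$, so $c(\f{T}_0) = [S^1] + [S^1] \neq 0$ in $H_1(\f{T}_0;\f{Z}_2) = (\f{Z}_2)^2$ — these are distinct classes, so the sum is nonzero. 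For $n>0$, the point is that the same two small disks survive near each end of the arc $\gamma$ over which $\f{T}_n$ is fibred, but now the relevant $H_1$ class of the "other" boundary circle is the vanishing cycle $V$, which is the same class from both sides; so the disks contributing to $c(\f{T}_n)$ come in pairs whose boundaries represent the \emph{same} class in $H_1(\f{T}_n;\f{Z}_2)$, and over $\f{Z}_2$ they cancel. More precisely, I would set up a degeneration/gluing argument: pushing $\gamma_r$ so that it passes very close to two adjacent critical values, the index $2$ disks of $\f{T}_n$ limit to the index $2$ disks of two copies of $\f{T}_0$ (one near each critical value), and tracking the boundary homology classes under the inclusion of a fibre shows each contributes the same class $[V] \in H_1(\f{T}_n)$; the total is an even multiple of $[V]$, hence $c(\f{T}_n)=0$. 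Alternatively, and perhaps more cleanly, I would argue via regularity: choose the standard integrable $J$ inherited from $\f{C}^3$, check (using the fibration structure and automatic transversality for index $2$ disks in a $4$-manifold with $c_1$ pairing appropriately, or a direct Cauchy–Riemann computation in the local model) that $J$ is regular, and then enumerate the index $2$ holomorphic disks explicitly as sections of $\Pi_n$ over small half-disks, matching vanishing cycles — concluding that there are exactly two per critical value that is "visible" from $\gamma_r$, each with boundary class $[V]$, when $n \geq 1$.

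The main obstacle I anticipate is establishing regularity of the chosen almost complex structure and carrying out the enumeration of index $2$ disks rigorously — i.e., proving that the only Maslov $2$ holomorphic disks are the expected local ones and that there are no others wrapping around globally. The area constraint from Lemma \ref{areas} does most of the confinement work, forcing the $z$-projection to be small, but one still must rule out disks with nontrivial component structure, apply Lazzarini's lemma to reduce to simple disks, and verify that parallel transport of $V$ does not create extra index $2$ configurations; handling the $n=0$ versus $n>0$ distinction correctly — namely that for $n=0$ the two boundary circles are the two \emph{independent} generators of $H_1(\f{T}_0)$ while for $n>0$ the disks on the two sides both have boundary equal to $[V]$ — is the crux of why the answer changes, and I would want to make that homological bookkeeping completely explicit.
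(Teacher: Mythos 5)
Your strategy of reducing to explicit disk counts near the critical values is in the right spirit, but the localization step at its core is false, and it leads you to an incorrect enumeration. By monotonicity a Maslov index $2$ class $\beta$ has $\omega(\beta)=\tau_\gamma$, which by Lemma \ref{areas} is the \emph{full} $\sigma$-area of the disk $D_\gamma$ bounded by $\gamma_r$ --- not a small quantity. Combined with $\beta\cdot\Pi_n^{-1}(0)=1$ from Lemma \ref{mas}, this forces the Maslov $2$ disks to be sections of $\Pi_n$ over all of $D_\gamma$; their $z$-projections are not confined near a single root of unity. Consequently their boundaries project with degree $1$ onto $\gamma_r$, so each boundary class has the form $L+kV$ with $L$ projecting to the generator of $H_1(\gamma_r)$; a Maslov $2$ disk can never have boundary class a multiple of $[V]$, since the classes in $\pi_2(S_n,\f{T}_n)$ with such boundary are spanned by the thimble and the matching sphere classes and hence have zero area and zero Maslov index. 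This kills both your cancellation mechanism (``pairs of disks each with boundary $[V]$'') and your count of two disks per critical value: the correct count, which the paper obtains by writing $(S_n,\f{T}_{n,\gamma})$ as an $n$-fold boundary connected sum of copies of $(\f{C}^2,\f{T}_0)$ and applying Seidel's gluing theorem for Lefschetz fibrations with Lagrangian boundary conditions (\cite{LES}, Proposition 2.7), is $2^{n+1}$ disks through a generic point, with $\binom{n+1}{k}$ of them in class $kV+L$. The vanishing of $c(\f{T}_n)$ for $n>0$ then comes from the identities $\sum_k\binom{n+1}{k}=2^{n+1}\equiv 0$ and $\sum_k k\binom{n+1}{k}=(n+1)2^n\equiv 0 \pmod 2$, not from disks pairing up with equal boundary classes (indeed there is exactly \emph{one} disk in class $L$). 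For $n\geq 2$ your count of $2(n+1)$ disks already disagrees numerically with $2^{n+1}$.

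Your base case ($n=0$: the Clifford torus bounds two families of disks whose boundary classes differ by $V$, so $c(\f{T}_0)=V\neq 0$) is correct and coincides with the paper's, and your instinct to degenerate $\gamma$ so that it splits into loops around individual critical values is also the right one. What is missing is the fibre-product structure of the glued moduli spaces: each of the $2^n$ disks on one side of the boundary connected sum glues to each of the $2$ disks on the other side, which is what produces the binomial coefficients, rather than the disks living independently near separate critical values. Making that gluing step precise (together with the cobordism invariance of the counts under area-preserving isotopies of $\gamma$ and the regularity of the glued moduli spaces) is the actual content of the proof, and your proposal does not supply a substitute for it.
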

\begin{proof}
Recall that if $\f{T}_n = \f{T}_\gamma$ is a
matching torus over a curve $\gamma$ and $\gamma$ bounds the disc $D_\gamma
\subset \bfc$, then for a $J$ making $\pi_n$ holomorphic, by maximum principle,
the sections have to project to $D_\gamma$. So, we will not distinguish $\Pi_n
: S_n \mapsto \f{C}$ and its restriction $\Pi_n| {\Pi_n^{-1}(D_\gamma)} \mapsto
D_\gamma$ when counting holomorphic sections of $\Pi_n$ with boundary on
$\f{T}_\gamma$. 

First we note that, if we have an isotopy of $\gamma_t$ in $\bfc\setminus
\text{Critv}(\Pi_n)$ such that $\tau_{\gamma_t} = \text{const}$, so that the
isotopy lifts to a Hamiltonian isotopy $F_t$ of $\f{T}_{\gamma_t}$ bounding
discs $D_t$, as in Corollary \ref{isotopy}, then $F_t$ gives identifications of
all $H_1(\f{T}_{\gamma_t}, \bfz_2)$ and the moduli spaces of sections $\Pi_n|
\Pi_n^{-1}(D_t) \mapsto D_t$ with boundary on $\f{T}_{\gamma_t}$ representing a
given class $a \in H_1(\f{T}_{\gamma_t}, \bfz_2)$ of (minimal) Maslov index 2
are cobordant. This can be seen as follows: For fixed $t_0 < t_1$ let $J_{t_0}$ and $J_{t_1}$ be regular almost complex structures making the moduli spaces of Maslov index $2$ sections $\mathcal{M}_{t_i} = \mathcal{M}(\Pi_n, \f{T}_{\gamma_{t_i}}, J_{t_i})$ regular one-dimensional manifolds. Consider the space $\mathcal{J}$ of almost complex structures in the total space which are simultaneously regular for counting sections in
$\mathcal{M}(\pi_r,\f{T}_{\gamma_t})$ for all $t\in [t_0,t_1]$ and makes $\Pi_n$
holomorphic, this is a subset of second category in the space of almost complex
structures on $S_n$ (since $[t_0,t_1]$ is compact and it is of second category for a fixed $t$). A generic path $J_t$ of almost complex structures in this space connecting $J_{t_0}$ and $J_{t_1}$ gives a smooth cobordism $\mathcal{M}_t = \mathcal{M}(\Pi_n, \f{T}_{\gamma_t}, J_t)$ of moduli spaces $\mathcal{M}_{t_0}$ and $\mathcal{M}_{t_1}$ since at no point during the isotopy $\f{T}_{\gamma_{t}}$ bounds Maslov index $\leq 0$ disks. Furthermore, since the matching tori $\f{T}_{\gamma_t}$ are parallel transported to each other, we get a bordism of the images of the  evaluation maps $ev_t : \mathcal{M}_t \to \f{T}_{\gamma_t}$ by considering the parametrized evaluation map $ev : [t_0, t_1] \times \mathcal{M}_t \to
\f{T}_{\gamma_{t_0}}$ where we use the parallel transport to identify $\f{T}_{\gamma_t}$ with $\f{T}_{\gamma_{t_0}}$.  Therefore, for the purpose of algebraically counting of pseudoholomorphic sections of $\Pi_n$ with boundary on $\f{T}_\gamma$ we are free to move $\gamma$ with such an isotopy.  

Now, consider the deformation $\gamma_t$ as in Figure \ref{deform} where
$\gamma_0=\gamma$ and $\gamma_1 = \alpha \# \beta$  such that $\alpha$ is an
embedded circle that encloses only one critical value and $\beta$ encloses the
remaining $n$ critical values.  

\begin{figure}[!h] \centering
\includegraphics[scale=0.6]{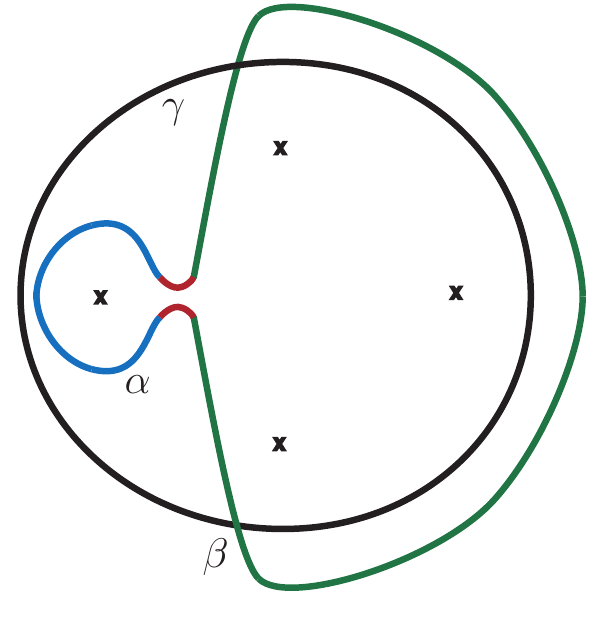}  \caption{Deformation of $\gamma$}
\label{deform} \end{figure}

To be precise, $\alpha$ and $\beta$ are closed embedded circles in the base
$\f{C}$ that intersect at a unique point $p$ and $\gamma_t$ is an isotopy
through embedded curves with $\tau_{\gamma_t} =\text{const.}$ and $\gamma_1$ is
very close to $\alpha \vee \beta$ (we can find such an isotopy by Lemma
\ref{areas}).  Let $D_\alpha$ , $D_\beta$ be the disks that $\alpha$ and
$\beta$ bound.  Then we can consider the Lefschetz fibrations $\pi_{\alpha}=
\Pi_n| {\Pi_n^{-1}(D_\alpha)} \mapsto D_\alpha$ and $\pi_{\beta}= \Pi_n|
{\Pi_n^{-1}(D_\beta)} \mapsto D_\beta$. We also set $\pi_{\gamma_1} = \Pi_n|
{\Pi_n^{-1}(D_{\gamma_1})} \mapsto D_{\gamma_1}$. Now, we can deform $\gamma_1$
to a sufficiently close neighborhood of $\alpha \vee \beta$ so that the
Lefschetz fibration $\pi_{\gamma_1}$ is a boundary sum of the Lefschetz
fibrations $\pi_{\alpha}$ and $\pi_{\beta}$ and the matching torus
$\f{T}_{\gamma_1}$ is obtained as a connect sum $\f{T}_{\alpha} \#
\f{T}_{\beta}$. To be careful, one first deforms the symplectic structure in a
neighborhood of the fibre above $p$, so that it is a trivial symplectic bundle
$F \times [-1,1]^2$ where $F$ is the fibre, and the piece of the Lagrangian
torus over $\gamma_1$ becomes two trivial circle bundles over the intervals $\{
	\pm \epsilon \} \times [-1,1]$. One then surgers the Lagrangian boundary condition within this trivialization so that the outcome
	is $\f{T}_\alpha$ and $\f{T}_\beta$.  (For more details, we refer to
	Proposition 2.7 and the preceding discussion in \cite{LES} for the
	details of boundary sum of Lefschetz fibrations which carry Lagrangian
	boundary conditions).

Now, let $\mathcal{M}(\pi_\alpha, \f{T}_\alpha, J_\alpha)$ be the moduli space
of $J_\alpha$ holomorphic sections of $\pi_\alpha$ with boundary condition
$\f{T}_\alpha$ and similarly let $\mathcal{M}(\pi_\beta, \f{T}_\beta, J_\beta)$
be the corresponding moduli space for $\beta$. Let $V = \f{T}_\alpha \cap
\f{T}_\beta$ is the vanishing cycle on the fibre $\Pi_n^{-1}(p)$ and,
$ev_\alpha : \mathcal{M}(\pi_\alpha, \f{T}_\alpha, J_\alpha) \to V$ and
$ev_\beta : \mathcal{M}(\pi_\alpha, \f{T}_\beta, J_\beta) \to V$ are evaluation
maps.  The basic gluing theorem [\cite{LES} , Proposition 2.7] proves that if
$J_\alpha$ and $J_\beta$ are regular and $ev_\alpha$ and $ev_\beta$ are
mutually transverse, then there exists a complex structure $J$ so that
$\mathcal{M}(\pi_{\gamma_1}, \f{T}_{\gamma_1} , J)$ is regular and is given as
a fibre product of \[ \mathcal{M}(\pi_{\gamma_1}, \f{T}_{\gamma_1} , J)^k=
\bigsqcup_{p+q-1  =k} \mathcal{M}(\pi_\alpha, \f{T}_\alpha, J_\alpha)^p
\times_{V} \mathcal{M}(\pi_\beta, \f{T}_\beta, J_\beta)^q \] where the fibre
product is taken with respect to the evaluation maps $ev_\alpha$ and $ev_\beta$
and the superscripts are dimensions. Recall that we are interested in counting
Maslov index 2 disks with boundary on $\f{T}_{\gamma}$, which live in the moduli space of index $\mu +
\text{dim}(\f{T}_\gamma) -3 =1$. Hence, according the gluing result above, it
suffices to understand the Maslov index 2 disks for $\f{T}_\alpha$ and
$\f{T}_\beta$.  

In fact, by induction it suffices to understand only the base case
$\mathcal{M}(\pi_\alpha, \f{T}_\alpha, J_\alpha)$, i.e. when only one critical
point is enclosed, since if $\beta$ encloses more than one critical point, we
can apply the above deformation to $\beta$ separately to break it up into
smaller pieces until each piece encloses only one critical point, see Figure
\ref{induction} for an illustration.

\begin{figure}[!h] \centering \includegraphics[scale=1]{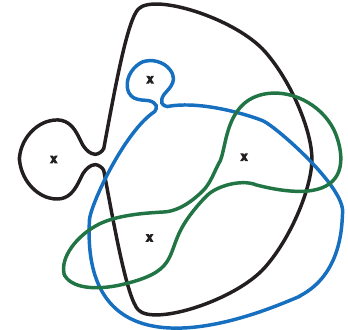}
\caption{Inductive deformations} \label{induction} \end{figure}

To tackle the base case, one applies a degeneration argument due to Seidel
\cite[Section 2.3]{LES}. Namely, for convenience, we can assume by an isotopy
through curves with fixed $\tau = \tau_\alpha$, $\alpha$ is
a round circle by the argument in the beginning of the proof. Let $D_r$ be disks of radius $r \in (0, a]$ where $D_a =
D_\alpha$. One considers the restrictions $\pi_r : \pi_\alpha|
\pi_\alpha^{-1}(D_r) \to D_r$ with Lagrangian boundary conditions given by
matching tori $\f{T}_r$ above $\partial{D_r}$. Exactly the same cobordism argument that we have given in the beginning of the proof shows that the algebraic count of pseudoholomorphic sections of $\pi_\alpha$ with boundary $\f{T}_r$ does not depend on $r$ when we vary $r$ in a compact interval (Note that again, none of the tori $\f{T}_r$ bound a Maslov index $\leq 0$ holomorphic disk). Now, Seidel proves a compactness result when as one lets $r \to 0$ (\cite[Lemma 2.15]{LES}) to conclude that when $r$ is sufficiently small, the
moduli space of sections can be computed using a model Lefschetz fibration.
$\pi \colon \f{C}^2 \to \f{C}$ given by $\pi:(x_1, x_2) \in \bfc^2\mapsto
(x_1^2+x_2^2) \in \bfc$ as in [\cite{LES}, eq.  (2.18)]. Lemma 2.16 of \cite{LES} explicitly computes all
sections of $\pi$ with boundary on $\f{T}_{\gamma_r} \subset \bfc^2$, where
$\gamma_r$ is the round circle of radius $r$ centered at $0$.  These are the
maps from closed disk of radius $s$ to $\mathbb{C}^2$ given by  \[ u_{a, \pm}
(w) = (r^{-1/2} aw + r^{1/2}\bar{a} , \pm i(r^{-1/2} aw-r^{1/2} \bar{a}) ) \]
for $a\in \f{C}$ with $|a| = \frac{1}{2}$ so this space is diffeomorphic to
	$S^1 \sqcup S^1$. Moreover by the same lemma this moduli space of
	sections is regular (for the standard complex structure on $\f{C}^2$).

 Following \cite{Aur}, we note that $\f{T}_{0, \gamma_r} \in \bfc^2$ is in fact
 the Clifford torus $|x|=|y|=r^{\frac{1}{2}}$. From the above explicit
 description, the images of the boundaries of the two families of holomorphic
 disks on $\f{T}_0$ are given by $x=const.$ and $y=const.$ (The same two
 families were obtained as the outcome of the computation in \cite{Cho},
 Theorem 10.1. This again shows that these disks are regular by \cite{Cho},
 Theorem 10.2.) Note that there are exactly two holomorphic sections with
 boundary through any given point $p \in \f{T}_0$ and their boundaries
 intersect transversely at a single point.  Therefore, the homology classes in
 $H_1(\f{T}_0)$ represented by the boundaries of these two families are of the
 form $L$ and $L+V$, where both $L$ and $L+V$ project to the generator of
 $H_1(S^1)$ under $\Pi$ and $V$ is the class of the vanishing cycle. 

Thus, we have determined $\mathcal{M}(\pi_\alpha, \f{T}_\alpha, J_\alpha)$
where $\alpha$ encloses only one critical point and $J_\alpha$ is the standard
complex structure (which is regular) and the computation also gives the evaluation map $ev_\alpha$. It remains to perform the inductive step of the computation to compute the Maslov index $2$ sections of $\f{T}_\gamma$

As discussed above the gluing theory shows that the count of sections for
$(S_n, \f{T}_\gamma)$ can be understood as the count of sections for the
$n$-fold boundary connect sum of $(S_0, \f{T}_0)$, which will be denoted by $(\Sigma_n, \tau_n)$. To describe the holomorphic disks in it we need a basis for
 $H_1(\tau_n)$. One element of the basis can be taken to be the vanishing cycle
 $V_n$. The choice of a second basis element is obtained a posteriori by the following lemma.

 \begin{lemma} Through any point on $\tau_n$ there are $2^{n+1}$ disks of
 Maslov index 2 in $(\Sigma_n, \tau_n)$, and there exist elements $L_n \in
 H_1(\tau_n)$ which together with $V_n$ form a basis of $H_1(\tau_n)$ and such
 that there are ${{n+1}\choose{k}}$ disks with boundary class $kV_n+ L_n$.
 \end{lemma} \begin{proof} We prove this by induction. As discussed
 above, the base case is the Clifford torus, where there are indeed 2
 holomorphic disks through every point, in classes whose difference is $V_1$.
 These moduli spaces are regular for standard complex structure on $\bfc^n$.
 Call one of them $L_1$ and another $L_1+V_1$. The inductive step is given by
 using the Seidel's gluing formula  \cite{LES}, Proposition 2.7 together with
 the base case. Each of the $2^n$  disks given by induction hypothesis in
 $(\Sigma_{n-1}, \tau_{n-1})$ glues to either of the 2 disks in $(S_0,
 \f{T}_0)$.  By the same proposition, the glued up moduli spaces are regular.
 Denoting by $L_{n}$ the class in  $H_1(\tau_n)$ obtained by gluing $L_{n-1}$
 and $L_1$, we have that the number of disks with boundary in class $k V_n+
 L_n$ is ${n\choose k} + {n\choose {k-1}} = {{n+1}\choose{k}}$, as claimed.
 \end{proof}

\emph{Completion of the proof of Lemma \ref{sections} }: We conclude that the total boundary class of Maslov index $2$ disks is given by
$\sum\limits_{k} {{n+1}\choose k} (k V_n+L_n)= 2^{n+1}L_n+ (n+1) 2^{n} V_n$,
which is $0 \in H_1(\tau_n; \bfz_2)$ for all $n>0$ and $[V_0] \neq 0 \in H_1(\tau_0; \bfz_2 )$ for $n=0$. Since the moduli spaces of
discs in $(\Sigma_n, \tau_n)$ and $(S_n, \f{T}_n)$ are identified, it follows that $c(\f{T}_0) \neq 0$ and $c(\f{T}_n)=0$ for $n>0$. \end{proof}

\begin{proposition} \label{QHup} $HF^{\ast}(\f{T}_n; \bfz_2) \simeq H^{\ast}(
\f{T}_n; \bfz_2)$ for $n>0$.  \end{proposition}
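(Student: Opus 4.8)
The plan is to simply assemble the structural results already established. By the (unnamed) proposition preceding Proposition \ref{ep}, each $\f{T}_n$ is a monotone Lagrangian torus in the symplectic $4$-manifold $(S_n, d\theta)$ with minimal Maslov number $N_{\f{T}_n} = 2$, so it lies exactly within the scope of Biran--Cornea's Proposition \ref{toriQH}. That proposition reduces the determination of $HF^*(\f{T}_n; \bfz_2)$ to the single class $c(\f{T}_n) \in H_1(\f{T}_n; \bfz_2)$: if $c(\f{T}_n) = 0$ then $HF^*(\f{T}_n; \bfz_2) \simeq H^*(\f{T}_n; \bfz_2)$ as $\bfz_2$-graded vector spaces, and otherwise $HF^*$ vanishes.

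So the first (and essentially only) step is to invoke Lemma \ref{sections}, which establishes precisely that $c(\f{T}_n) = 0$ for every $n > 0$, the obstruction being nonzero only in the $n = 0$ Clifford torus case. Feeding $c(\f{T}_n) = 0$ into Proposition \ref{toriQH} immediately yields the claimed isomorphism. Nothing else needs checking: monotonicity and the value $N_{\f{T}_n} = 2$ were settled earlier, geometric boundedness of $S_n$ (required for the pearl complex to be defined) holds because $S_n$ is Stein, and the enumeration of Maslov index $2$ boundary classes carried out in Lemma \ref{sections} is exactly the input computing $c(\f{T}_n)$.

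If one prefers not to appeal to the full strength of Proposition \ref{toriQH}, the weaker argument recorded in the remark following it suffices for the direction we need: taking a Morse function on $\f{T}_n$ with a unique maximum $m$, one has $\partial m = PD(c(\f{T}_n)) \cdot t = 0$ once $c(\f{T}_n) = 0$, so $m$ survives to a nonzero class in $HF^*(\f{T}_n;\bfz_2)$; combined with the fact that for a two-torus the only options are $HF^* \simeq H^*$ or $HF^* = 0$, this again gives the result. There is thus no genuine obstacle at this stage — all the real work (the model disk count in $\bfc^2$ together with Seidel's gluing induction) was already completed in Lemma \ref{sections}.
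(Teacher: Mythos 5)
Your proposal is correct and follows exactly the paper's own argument: Proposition \ref{toriQH} applied with the computation $c(\f{T}_n)=0$ from Lemma \ref{sections} gives the isomorphism immediately. The alternative route via the remark is a fine redundancy but adds nothing beyond what the paper records.
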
 

\begin{proof}
	The proof follows immediately from Proposition \ref{toriQH} together with Lemma \ref{sections}.
\end{proof}

\begin{rem} In view of Proposition~\ref{ep},
Proposition~\ref{QHup} generalizes a theorem of Albers and Frauenfelder from \cite{AF} where the authors computed $HF^*(\f{T}_1; \f{Z}_2)$.
\end{rem}

\section{The rational homology balls $B_{p,q}$}

\subsection{A finite group action on the $A_n$ Milnor fibre}

As before, let $p>q >0$ be two relatively prime integers. Let $\f{Z}_p = \{ \xi
\in \f{C} : \xi^p =1 \} $ be the cyclic group. Let us consider a
one-parameter smoothing of the isolated surface singularity of
type $A_{p-1}$, i.e. we consider the hypersurface singularity given by $ z^p+2xy
= 0 \subset \f{C}^3$ and the smoothing of this singularity given by $F :
\f{C}^3 \to \f{C}$ , where $F(x,y,z)= z^p+2xy$. We let $\Gamma_{p,q}$ to denote
the following action of $\f{Z}_p$ on $\f{C}^3$ given by \[\xi \colon (x,y,z)
\to (\xi  x, \xi^{-1} y, \xi^q z ) \]

Clearly, the action is free outside of the origin and the function $F$ is
invariant under the action. Indeed, we get a $\f{Q}$HD-smoothing of the
singularity $F^{-1}(0) /  \Gamma_{p,q}$. The latter is known to be the cyclic
quotient singularity of type $(p^2, pq-1)$ (\cite{wahl} Example 5.9.1). We
denote the Milnor fibre $F^{-1}(1) / \Gamma_{p,q}$ by $S_{p-1}/
\Gamma_{p,q} = B_{p,q} $. 

The action $\Gamma_{p,q}$ can be visualized easily in terms of the Lefschetz
fibration $\Pi : S_{p-1} \to \f{C}$. Namely, $\f{Z}_p$ acts freely by lifting
the rotation of the base of the Lefschetz fibration around the origin by an
angle of $\frac{2\pi q}{p}$, as well as rotating the fibres by an angle of $
\frac{2\pi }{p}$.

Note that this makes it clear that $B_{p,q}$ is a rational homology ball. On
the other hand, since $S_{p-1}$ is simply-connected, we have $\pi_1(B_{p,q}) =
\f{Z}_p$. Note also that the Stein structure on $S_{p-1}$ induces a Stein
structure on $B_{p,q}$. Recall that $B_{p,q}$ is a smoothing of the cyclic
quotient singularity of type $(p^2, pq-1)$, that is $\f{C}^2/ \f{Z}_{p^2}$
where $\f{Z}_{p^2} = \{ \xi \in \f{C} : \xi^{p^2} =1 \} $ acts by $\xi \colon
(w_1,w_2) \to (\xi w_1, \xi^{pq-1} w_2)$. Therefore the boundary of $B_{p,q}$
is the lens space $L(p^2, pq-1)$. The Stein structure on $B_{p,q}$ induces a
contact structure $\xi_{p,q}$ on $L(p^2, pq-1)$, which is also filled by the
singular fibre of the deformation. This can in turn be resolved to obtain a
Milnor filling by the resolution of the cyclic quotient singularity, which we
denote by $C_{p,q}$. $C_{p,q}$ is given by the linear plumbing graph below: 

\begin{figure}[!h]
\centering
\includegraphics[scale=0.8]{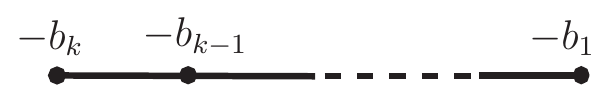}  
\caption{$C_{p,q}$}
\label{Figure1}
\end{figure}

Thus, $C_{p,q}$ is the linear plumbing of disk bundles of over the 2-sphere
with Euler number $-b_i$.  Here $b_i$ are obtained by the unique continued
fraction expansion $\frac{p^2}{pq-1} = [b_k,b_{k-1},\ldots,b_1]$ with all $b_i \geq 2$.

In fact, we claim that the Stein surfaces $B_{p,q}$ are exactly those that are
used by Fintushel-Stern (and J. Park) in rational blow-down operation. Namely,
let $K(m,n)$ denote the 2-bridge knot, whose double branched cover is the lens
space $L(m,n)$. It is known that $K(p^2, pq-1)$ is slice (in fact ribbon) for
$p>q>0$ relatively prime (see for ex. \cite{lisca}). Fintushel-Stern's rational
homology balls (\cite{FS}) are given by the double branched cover of the
four-ball branched over the slice disk for $K(p^2,pq-1)$.

\begin{proposition} $B_{p,q}$ is diffeomorphic to double branched cover of
$D^4$ branched along the slice disk bounding $K(p^2,pq-1)$.  \end{proposition}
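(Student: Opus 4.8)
The plan is to identify both sides of the claimed diffeomorphism with the same handlebody/Kirby picture, building on the explicit description of $B_{p,q}$ as the free quotient of the $A_{p-1}$ Milnor fibre by $\Gamma_{p,q}$ obtained above. First I would recall that $S_{p-1}$, as the total space of the Lefschetz fibration $\Pi_{p-1}$ over a disk with $p$ critical points whose vanishing cycles form a chain, has a standard handle decomposition: a single $0$-handle (the fibre, an annulus) together with $p$ $2$-handles attached along the vanishing cycles. The $\f{Z}_p$-action $\Gamma_{p,q}$ permutes the critical values cyclically (rotation of the base by $2\pi q/p$) while rotating the fibres, and because the action is free this descends to a handle decomposition of $B_{p,q}$ with one $0$-handle and a single $2$-handle; tracking the framings through the quotient realises $B_{p,q}$ as a $2$-handlebody whose attaching data is determined by $(p,q)$. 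Equivalently — and this is the form I would actually push — one uses that $B_{p,q}$ is a $\f{Q}$HD-smoothing of the cyclic quotient singularity of type $(p^2,pq-1)$, so its boundary is $L(p^2,pq-1)$ with the canonical contact structure, and $B_{p,q}$ admits a Stein handle presentation with a single $1$-handle and a single $2$-handle (a genus-one open book on $L(p^2,pq-1)$), matching the presentation used by Casson--Harer and in \cite{KSB}.

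On the other side, the double branched cover $\Sigma_2(D^4, \Delta)$ of the slice disk $\Delta$ bounding the $2$-bridge knot $K(p^2,pq-1)$ also has a small handle decomposition: starting from a ribbon presentation of $\Delta$ with a single band (which exists for $2$-bridge ribbon knots of this type, as in \cite{lisca}), Akbulut--Kirby's recipe for double branched covers of ribbon disks produces a handlebody with one $1$-handle and one $2$-handle, again with attaching data read off from the continued fraction expansion $\tfrac{p^2}{pq-1}=[b_k,\ldots,b_1]$. The key step is then to exhibit an explicit diffeomorphism of the two resulting Kirby diagrams — matching the $1$-handle/$2$-handle pair of the quotient $S_{p-1}/\Gamma_{p,q}$ with the $1$-handle/$2$-handle pair of the branched cover — and to check that the boundary identification $L(p^2,pq-1)\to L(p^2,pq-1)$ is the identity (or at least isotopic to it), so that the diffeomorphism is absolute and not merely a homology equivalence. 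Since both manifolds are simply-presented rational homology balls with the same boundary lens space, and the handle data on both sides is governed by the same continued fraction, this comparison is a finite check.

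I expect the main obstacle to be the bookkeeping in the Kirby calculus: going from the quotient handle decomposition of $S_{p-1}/\Gamma_{p,q}$ (where the single surviving $2$-handle inherits a framing twisted by the fibre rotation, so the framing coefficient must be computed carefully, keeping track of the $\f{Z}_p$-equivariant normal data of the chain of vanishing cycles) and independently computing the branched-cover diagram from the band presentation of the ribbon disk, and then seeing that the two diagrams agree after a sequence of handle slides and isotopies. A clean way to organise this, which I would adopt, is to invoke the classification results already cited: both $B_{p,q}$ and $\Sigma_2(D^4,\Delta)$ are minimal symplectic (indeed Stein) fillings of $(L(p^2,pq-1),\xi_{p,q})$ with $b_2=0$, and it is known (work of Lisca \cite{lisca}, together with the classification of fillings of lens spaces) that such a filling is unique up to diffeomorphism; alternatively one cites directly that Fintushel--Stern's $B_{p,q}$ was shown by Casson--Harer / Park to coincide with the Milnor-fibre description. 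In the write-up I would first give the explicit handle picture for $S_{p-1}/\Gamma_{p,q}$, then recall the standard description of the branched cover of a ribbon disk, point out that both are determined by $(p,q)$ via the continued fraction, and close by matching them; the boundary-orientation check is the one place where I would be careful to state it explicitly rather than leave it implicit.
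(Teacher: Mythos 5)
Your ``clean way'' is in fact the paper's entire proof: the paper does not perform any Kirby-calculus matching for this proposition, but argues purely by classification --- Lisca's theorem \cite{lisca} gives exactly two diffeomorphism types of symplectic fillings of $(L(p^2,pq-1),\xi_{p,q})$, realized by $C_{p,q}$ and by the double branched cover of $D^4$ along the slice disk for $K(p^2,pq-1)$; since $B_{p,q}$ is a Stein filling with $b_2=0$ it cannot be $C_{p,q}$, so it must be the branched cover. Two remarks on the differences. First, the classification argument requires knowing that the contact structure induced on $\partial B_{p,q}$ is the \emph{universally tight} one to which Lisca's classification applies; you assert this as ``the canonical contact structure,'' whereas the paper justifies it by observing that $\xi_{p,q}$ is Milnor fillable (it bounds the resolution $C_{p,q}$ of the cyclic quotient singularity), hence universally tight by \cite{LO}, and universally tight structures on lens spaces are unique up to contact isomorphism --- you should make this step explicit. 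Second, the explicit handle-matching route that your write-up plan ultimately leans toward is genuinely different from the paper's proof and considerably more laborious: the paper does construct the one-$1$-handle/one-$2$-handle Legendrian diagram for $B_{p,q}$ (in a later subsection, for other purposes), but never computes the branched double cover of the ribbon disk, and carrying that comparison through would require exactly the framing and boundary-identification bookkeeping you flag as the obstacle. Also a small correction to that sketch: a thickened annulus fibre is $S^1\times D^3$, i.e.\ a $0$-handle plus a $1$-handle, so the quotient decomposition has one $0$-handle, one $1$-handle and one $2$-handle, not just a $0$-handle and a $2$-handle.
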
 

\begin{proof} As we observed above, $C_{p,q}$ yields a Milnor filling of the
contact structure $(L(p^2,pq-1), \xi_{p,q})$. Therefore, by \cite{LO}, the
contact structure $\xi_{p,q}$ must be universally tight. (This also follows
from the fact that $\xi_{p,q}$ is the induced contact structure on the boundary of
the cyclic quotient singularity of type $(p^2, pq-1)$).  Up to contact
isomorphism, it is known that there is a unique universally tight contact
structure on $L(p^2,pq-1)$. Furthermore, Lisca has given a classification
result for the diffeomorphism types of the fillings of the tight contact
structures on lens spaces (\cite{lisca}). It follows from this classification
that in the case of $(L(p^2,pq-1),\xi_{p,q})$, there are two possibilities for
the diffeomorphism types of symplectic fillings, and these classes are realized
by the manifolds $C_{p,q}$ and the double branched cover of $D^4$ branched
along the slice disk bounding $K(p^2,pq-1)$. The latter must then be
diffeomorphic to $B_{p,q}$ since $B_{p,q}$ is a Stein filling which is not
diffeomorphic to $C_{p,q}$. \end{proof}

We have equipped the manifold $B_{p,q}$ with the Stein structure induced from
$S_{p-1}$ given as the finite free quotient of the Stein structure on
$S_{p-1}$. This is the same as the Stein structure on $B_{p,q}$ thinking of it
as an affine algebraic variety because $B_{p,q}$ is an algebraic quotient of
$S_{p-1}$. Note that there exists a unique Stein structure up to deformation on
$S_{p-1}$. This follows, for example, from \cite{wendl}. Therefore, it seems
likely that $B_{p,q}$ in fact has a unique Stein structure, however we do not
know how to prove or disprove this. On the other hand, any putative exotic Stein
structure on $B_{p,q}$ would lift to the standard Stein structure on $S_{p-1}$.
Therefore, for our arguments, we do not need to make precise which Stein
structure is being considered on $B_{p,q}$. Note also that the same reasoning
shows that any Stein structure on $B_{p,q}$ would have to fill the unique (up
to contact isomorphism) universally tight contact structure.

\subsection{Legendrian surgery diagram of $B_{p,q}$}
\label{Leg}

In this section, we construct a Stein structure on $B_{p,q}$ via Legendrian surgery on a Legendrian knot on $S^1 \times S^2$. We see from our description that the $p$-fold cover of the surgery diagram that we depict gives a surgery diagram of the Stein structure on $S_{p-1}$. 

Recall that the Stein structure on $S_{p-1}$ can be drawn as in top figure of
Figure \ref{Leg1} starting from the Lefschetz fibration $\Pi$. It is understood
that all the framings are given by $tb-1$ framing, where $tb$ denotes the
Thurston-Bennequin framing. From the Lefschetz fibration view, the 1-handle can
be understood as the thickening of the fibre over the origin and the
$2$-handles correspond to thimbles over the linear paths connecting the origin to
the critical values ($p^{th}$ roots of unity).

Now, we can apply $q$ full negative twists around the 1-handle, which would
change the smooth framing of individual handles from $-1$ to $-1-q$, and this
can be drawn as in the middle figure of Figure \ref{Leg1}, where there are $p$
twisted handles which have $tb-1=-1-q$, as wanted. In other words, the middle
figure also gives a Stein structure on $S_{p-1}$ and since there is a unique
Stein structure on $S_{p-1}$ up to Stein deformation, we can in fact work with
the middle figure. The advantage of doing this is that it allows us to see the $\Gamma_{p,q}$ action on the diagram. Namely, it sends the 1-handle to the
quotient 1-handle and translates the attaching circles of the 2-handles (in the
horizontal direction as drawn). The bottom figure in Figure $\ref{Leg1}$
depicts the quotient diagram for the action $\Gamma_{p,q}$ on $S_{p-1}$
yielding $B_{p,q}$. (See Section 6.3 \cite{GS} for a discussion of finite
covers of handlebody diagrams). Here, there is a unique 2-handle that passes through the 1-handle $p$ times and it has framing $tb-1 = -pq-1$.

\begin{figure}[!h] \centering
	\includegraphics[scale=1]{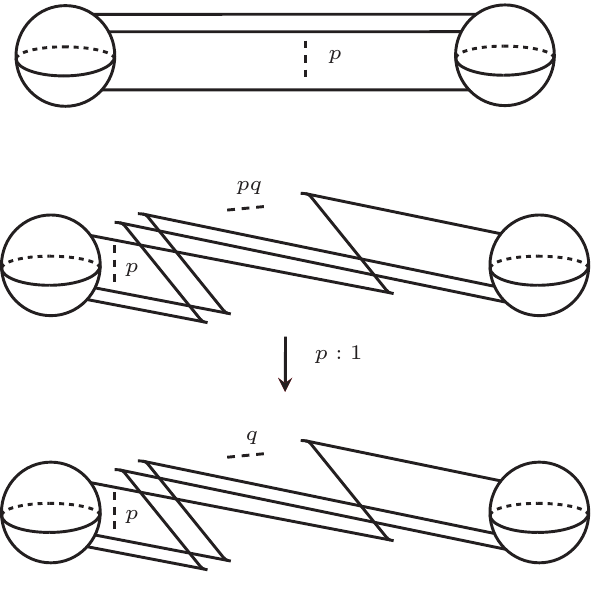}  \caption{Legendrian surgery diagrams: $S_{p-1}$ (top and middle), $B_{p,q}$ (bottom)}
\label{Leg1} \end{figure} 

\begin{remark} The smooth handlebody description of $B_{p,q}$ consisting of just one 1-handle and one 2-handle seems to be not widely known 
	for $q>1$ (see Figure 8.41 in \cite{GS} for $q=1$ which matches with
	the above picture) . Here, we provide not only a smooth handlebody
	description but also a Legendrian realization of the attaching circle
	of the 2-handle as a Legendrian knot in $S^1 \times S^2$ such that the
	smooth framing is given by $tb-1$, hence this description equips
	$B_{p,q}$ with a Stein structure (see \cite{GS} Chapter 11).  From our
	description, it also follows that the Stein structure that we obtain
	this way is the same as the Stein structure induced from $S_{p-1}$ via
	the action $\Gamma_{p,q}$.  \end{remark}

\subsection{Lagrangian submanifolds of $B_{p,q}$}

The exact Lagrangian submanifolds of $S_{p-1}$ has been studied extensively. We will use the understanding provided by Ritter (\cite{ritter}) and Ishii, Ueda and Uehara (\cite{iu}, \cite{iuu}) to prove the following theorem:

\begin{theorem} \label{none} For $p\neq2$, there does not exist any closed exact Lagrangian submanifold in $B_{p,q}$.
\end{theorem}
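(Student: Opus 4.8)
The strategy is to argue by contradiction: suppose $L \subset B_{p,q}$ is a closed exact Lagrangian submanifold, and lift it to the $p$-fold cover $S_{p-1}$. Since the covering $S_{p-1} \to B_{p,q}$ is unramified and the Stein (hence Liouville) structure on $B_{p,q}$ is by construction the quotient of that on $S_{p-1}$, the preimage $\widetilde{L}$ of $L$ is a closed exact Lagrangian submanifold of $S_{p-1}$, possibly disconnected, with a free $\f{Z}_p$-action permuting its components. First I would record what is known about closed exact Lagrangians in $S_{p-1}$: they must be spheres (for $p \geq 3$ the only closed orientable surfaces that can be exact Lagrangian in $S_{p-1}$ are $2$-spheres, since $H^1(S_{p-1})=0$ forces orientability and a Floer-theoretic/HMS input rules out higher genus and tori — this is exactly the ``detailed understanding'' attributed to Ritter for $p$ odd via twisted symplectic cohomology, and to Ishii--Ueda--Uehara on the mirror $B$-model for $p$ even). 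Moreover these Lagrangian spheres are classified up to Hamiltonian isotopy (equivalently, quasi-isomorphism in the Fukaya category) by their classes in $H_2(S_{p-1};\f{Z}) \cong \f{Z}^{p-1}$, which is the root lattice of $A_{p-1}$, with the spherical classes being the roots.

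Next I would bring in the $\f{Z}_p$-symmetry. The deck group acts on $H_2(S_{p-1};\f{Z}) = \f{Z}^{p-1}$, and under the identification with the $A_{p-1}$ root lattice this action is (up to the shuffling coming from the parameter $q$) the cyclic rotation of the $A_{p-1}$ Dynkin diagram extended to the affine $\widetilde{A}_{p-1}$ picture — concretely, it comes from rotating the base of the Lefschetz fibration by $2\pi q/p$ and the fibre by $2\pi/p$, as described in the paper. The components of $\widetilde{L}$ form a single free $\f{Z}_p$-orbit (or a union of such orbits), so their homology classes are permuted cyclically by this action; in particular the sum of the classes over one orbit is $\f{Z}_p$-invariant. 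The key point is then a rigidity statement: two disjoint exact Lagrangian spheres in $S_{p-1}$ must have homology classes that are either equal or have intersection number obstructions — disjointness forces $\langle [\widetilde{L}_i], [\widetilde{L}_j]\rangle = 0$ for $i \neq j$, and combined with the fact that each is a root (self-intersection $-2$) and that the $\f{Z}_p$-orbit is cyclically permuted, one derives a contradiction with the structure of the $A_{p-1}$ lattice: there is no $\f{Z}_p$-invariant configuration of $p$ pairwise-orthogonal roots in the $A_{p-1}$ root system whose orbit is free, essentially because the invariant sublattice under the diagram rotation is too small (the Smith-theory / averaged-class argument shows the orbit sum would have to be $p$ times something, but it is also a sum of $p$ distinct orthogonal roots, which cannot happen in $A_{p-1}$ for $p \neq 2$).

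The main obstacle — and where I expect the real content of the authors' proof to lie — is establishing the classification input for $S_{p-1}$ with full strength: namely that \emph{every} closed exact Lagrangian in $S_{p-1}$ is a sphere, and that it is determined by its homology class. For $p$ odd this is supposed to follow from Ritter's computation of twisted symplectic cohomology of $S_{p-1}$ obstructing exact tori and higher-genus surfaces, while for $p$ even the twisted symplectic cohomology argument degenerates and one instead invokes homological mirror symmetry: the Fukaya category of $S_{p-1}$ is equivalent to (a version of) the derived category of coherent sheaves on the minimal resolution of the $A_{p-1}$ surface singularity, and Ishii--Ueda--Uehara's classification of spherical objects there pins down the objects supported by closed exact Lagrangians. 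I would also need to be careful that the $\f{Z}_p$-action on $S_{p-1}$ is compatible with these identifications (it is, being algebraic on the affine variety and hence acting on the mirror side), so that the deck-group symmetry descends to an action on the relevant Grothendieck group / homology lattice in the expected way. Once those inputs are in place, the lattice-theoretic contradiction is elementary; the restriction $p \neq 2$ enters precisely because for $p = 2$ the root lattice $A_1 = \f{Z}$ is one-dimensional and the deck involution can act by $-1$, so a single Lagrangian sphere (the zero-section $S^2 \subset T^*S^2 = S_1$) descends to a Lagrangian $\f{RP}^2$ — which is indeed exact in $B_{2,1}$ — and no contradiction arises.
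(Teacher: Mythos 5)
There is a genuine gap, and it sits exactly where the theorem is hardest. Your opening moves match the paper's: lift $L$ to $S_{p-1}$, invoke Ritter to conclude the preimage is a union of Lagrangian spheres, and rule out $L$ being a sphere by a homological argument (the paper does this even more cheaply, noting $[L]\cdot[L]=-2$ is impossible since $H_2(B_{p,q})=0$). For $p$ odd this already finishes, since a sphere cannot cover $\f{R}P^2$ with odd degree. The problem is the remaining case, $p=2r$ even and $L\cong\f{R}P^2$: there the preimage is $r=p/2$ spheres $V, RV,\dots,R^{r-1}V$, each preserved by $R^r$, so the $\f{Z}_p$-action on components is \emph{not} free and your "orbit of $p$ pairwise-orthogonal roots" argument does not apply. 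Worse, the lattice obstruction you hope for in this case simply does not exist: for $p=4$, writing $H_2(S_3)$ in the $\widetilde{A}_3$ picture with classes $\alpha_0,\dots,\alpha_3$, $\sum\alpha_i=0$, and $R\colon\alpha_i\mapsto\alpha_{i+1}$, the root $\beta=\alpha_0+\alpha_1$ satisfies $\beta\perp R\beta$ and $R^2\beta=-\beta$ — a perfectly consistent $\f{Z}_4$-equivariant configuration of $r=2$ orthogonal roots. So no purely homological argument can close the even case.

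The second problem is your classification input: exact Lagrangian spheres in $S_{p-1}$ are \emph{not} classified up to Hamiltonian isotopy (or Fukaya-categorical quasi-isomorphism) by their classes in $H_2$. What Ishii--Ueda--Uehara actually give is that every spherical object is isomorphic to a matching sphere $V_c$ for an embedded path $c$ between critical values, and by Khovanov--Seidel the isotopy class of $c$ rel the critical value set is the invariant — non-isotopic paths with the same endpoints give spheres in the same homology class with different Floer cohomologies. This finer invariant is precisely what the paper uses to handle the even case: $R^rV=V$ forces $c$ and $-c$ to be isotopic rel the critical set, hence $c$ joins antipodal roots of unity, and then $\operatorname{rk} HF^*(V_c,V_{c'})=2\iota(c,c')$ for $c'=Rc$, where a direct computation in $H_1$ of the $p$-punctured plane shows the algebraic intersection number $\langle[c],[c']\rangle=1+2a_{\kappa+q}-2a_{\kappa-q}$ is odd, so $V$ and $RV$ cannot be disjoint. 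To repair your proof you would need to replace the homology-class bookkeeping by this path/intersection-number argument (or some equivalent categorical rigidity); the averaged-class/Smith-theory idea cannot substitute for it.
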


Before we give a proof of this theorem, we will make it clear what knowledge of
exact Lagrangian submanifolds in $S_{p-1}$ will be needed. In \cite{ritter}, it
is proven using symplectic cohomology with twisted coefficients that every
closed exact Lagrangian submanifolds in $S_{p-1}$ is diffeomorphic to $S^2$. Since the intersection form on $H_2(S_{p-1})$ is negative definite, and the homology class of an orientable closed Lagrangian submanifold $L$ in a Stein surface satisfies $[L] \cdot [L] = - \chi(L)$, it follows easily that orientable closed exact Lagrangians can only be sphere or tori. Ritter's result tells us that in fact any closed exact Lagrangian in $S_{p-1}$ has to be orientable, moreover it cannot be a torus.

Note that there is an abundance of inequivalent exact Lagrangian spheres in
$S_{p-1}$ provided by the matching sphere construction. Ishii, Ueda and
Uehara's results from \cite[Lemma 38]{iuu} (which in turn depends on \cite{iu})  imply that in the exact Fukaya
category of $S_{p-1}$ any spherical object is isomorphic to a matching sphere,
$S_c$ where $c:[0,1] \to \f{C}$ denotes the corresponding embedded path
connecting critical values of the Lefschetz fibration $\Pi : S_{p-1} \to
\f{C}$. More precisely, if $L, L' \subset S_{p-1}$ is an exact Lagrangian
submanifold, we know from \cite{ritter} that they are spheres (which are indeed
spherical objects), hence by \cite{iu}, \cite{iuu} they are isomorphic to
matching spheres $V_c$, $V_{c'}$ where $c, c':[0,1] \to \f{C}$ denote the
corresponding paths. This is useful as it implies that $HF^*(L,L') \simeq
HF(V_c, V_{c'})$. Informally, for the purpose of Floer theory, one can pretend
that every exact Lagrangian submanifold of $S_{p-1}$ is a matching sphere.
Ishii, Ueda and Uehara's result uses homological mirror symmetry to get a
quasi-isomorphic model for the exact Fukaya category of $S_{p-1}$ (this makes use of a formality result proved in \cite{ST}) and uses sheaf theoretical arguments on the mirror category to characterize spherical objects (see also the discussion in \cite[Section 3b]{SAn}). 

\begin{proof}[Proof of Theorem \ref{none}]  Let $L$ be a closed exact
Lagrangian submanifold in $B_{p,q}$.  Then the preimage $L'$ of $L$ in
$S_{p-1}$ by the quotient map is a closed exact Lagrangian submanifold of
$S_{p-1}$ (possibly disconnected). By \cite[Theorem 52]{ritter}, $L' $ is a
union of spheres, and since $L$ is covered by each connected component of $L'$,
$L$ is either a sphere or an $\f{R}P^2$.  A Lagrangian sphere has
self-intersection  $-2$, and hence represents a non-torsion class in
$H_2(B_{p,q})$. This is impossible as $H_2(B_{p,q})=0$.
On the other hand, a Lagrangian $\f{R}P^2$ would have to be double covered by
some number of Lagrangian spheres in $S_{p-1}$. This is an immediate
contradiction if $p$ is odd. 

Suppose $p=2r$ is even. Let $R$ be a generator of the cyclic group $\f{Z}_p$
acting on $S_{p-1}$. Then $L'$ is a disjoint union of $r$ Lagrangian spheres $V,
R(V), \ldots R^{r-1}(V)$ and $R^r$ maps each of these spheres onto themselves
so that the quotient $L$ is an $\f{R}P^2$. 

We now use Ishii, Ueda and Uehara's results from \cite{iu}, \cite{iuu}
discussed above to replace $V$ with an isomorphic object $V_c$ in the exact
Fukaya category of $S_{p-1}$ where $V_c$ is a matching sphere for a possibly
quite complicated path $c$. Now, $R^r$ is the antipodal map, $R^r(x,y,z) = (-x,-y,-z)$. Hence, $R^r V$ is represented by the matching
sphere over the path $-c$.  Since  $R^r V = -V$ this means $V_{-c}$ and $-V$
are isomorphic in the exact Fukaya category, which by  \cite{SK} implies that
$c$ and $-c$ are isotopic (as unoriented paths) by a compactly supported
isotopy in $\f{C}$ that fixes $D=\{ e^{2\pi i k/p} , k=0,1,\ldots p-1 \}$
pointwise. In particular, this implies that if  $c(0) = e^{2\pi i \kappa/p}$
then  $c(1)= - e^{2\pi i \kappa/p}$. 

 Since we assumed $p>2$, $V$ and $R(V)$ are disjoint exact Lagrangian spheres,
 and we have $0 = HF^*(V,R(V)) = HF^*(V_{c}, R(V_{c}))$. Note that $R(V_{c})$
 is simply $V_{c'}$ where $c'(t) = e^{2\pi i q /p} c(t)$ and by \cite[Lemma
 6.14]{SK} , we have that rank of $HF^*(V_{c}, V_{c'})$ is $2 \iota(c,c')$ where
 $\iota(c,c')$ is the geometric intersection number, i.e.  minimal possible
 number of intersections among representatives of the isotopy class of $c$ and
 $c'$ with respect to a compactly supported isotopy in $\f{C}$ that fixes $D$
 pointwise. The following lemma about plane geometry of curves proves that
 $\iota(c,c')$ cannot be zero for $p>2$, which gives $HF^*(V_{c},V_{c'})\neq
 0$ contradicting the fact $V$ and $R(V)$ are disjoint and completes the proof
 of non-existence of exact Lagrangian submanifolds. \end{proof}

\begin{lemma} Let $p >q > 0$ be relatively prime integers, and $p=2r>2$ be an even
number. Let $D = \{ e^{2 \pi i k/p} : k = 0,1,\ldots, p-1 \}$.  Let $c : [0,1]
\to (\f{C}, D)$ be an oriented embedded curve such that $c(0)= e^{2 \pi i \kappa/ p }$
and $c(1) = - e^{2\pi i \kappa/p}$ for some $\kappa \in \{0,1,\ldots,p-1 \}$, and assume that the curve $-c(t)$ is isotopic to $c(-t)$ by a compactly supported isotopy in $\f{C}$ fixing $D$.

Further, let $c':[0,1] \to (\f{C}, D)$ be the curve given by $c'(t) = R(c(t)) = e^{2
\pi i q  /p } c(t)$.   Then the geometric intersection number of $c$ and $c'$ (the
minimal number of intersections among representatives of the isotopy classes
with respect to a compactly supported isotopy in $\f{C}$ fixing $D$) is non-zero.  \end{lemma}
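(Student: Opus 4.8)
The plan is to work in hyperbolic geometry. Since $p=2r>2$, the set $D$ has $p\ge 4$ points, so $\f{C}\setminus D\cong S^{2}\setminus(D\cup\{\infty\})$ is a hyperbolic Riemann surface; I would equip it with its Poincar\'e metric $g$. Because $g$ is the unique complete hyperbolic metric in its conformal class, every biholomorphism of $\f{C}\setminus D$ is a $g$-isometry. In particular both the rotation $\rho(z)=e^{2\pi i q/p}z$ (which permutes $D$ since $\gcd(p,q)=1$, and satisfies $c'=\rho\circ c$) and the antipodal map $a(z)=-z$ (which preserves $D$, as $-e^{2\pi i k/p}=e^{2\pi i(k+r)/p}$) act by $g$-isometries of $\f{C}\setminus D$, and $0$ is the unique fixed point of $a$ on this surface.

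Next I would pass to geodesic representatives. Write $\hat c$ for the $g$-geodesic representative of the isotopy class (rel $D$) of $c$: a simple bi-infinite geodesic whose two ends run out the cusps $A:=e^{2\pi i\kappa/p}$ and $B:=-A$; it exists and is unique because $c$ is an essential embedded arc ($A\ne B$). By hypothesis $a\circ c$ is isotopic to $c$ as an unoriented arc rel $D$, so $a(\hat c)$ — being the geodesic representative of the class of $a\circ c$ — equals $\hat c$. Thus $a$ restricts to an order-two homeomorphism of the line $\hat c$ that interchanges its two ends (since $a$ swaps $A$ and $B$); such an involution of $\f{R}$ has exactly one fixed point, which is then a fixed point of $a$, hence is $0$. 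So $0\in\hat c$. Applying the isometry $\rho$ and using $\rho(0)=0$, the geodesic representative of $[c']=[\rho\circ c]$ is $\hat c':=\rho(\hat c)$, and $0\in\hat c'$ as well.

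To finish, the endpoint pairs $\{A,B\}$ of $c$ and $\{\rho A,\rho B\}$ of $c'$ are disjoint (using $q\not\equiv 0,r\pmod p$, which follows from $\gcd(p,q)=1$ and $p>2$), so $c\not\simeq c'$ and the geodesics $\hat c,\hat c'$ are distinct; two distinct geodesics sharing the point $0$ must cross transversally there (if they were tangent at $0$ they would agree near $0$, hence everywhere). Since geodesic representatives of essential arcs on a hyperbolic surface are in minimal position, $\iota(c,c')=\#(\hat c\cap\hat c')\ge 1$, which is exactly the claim.

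The one part that needs care — which I would flag as the genuine input rather than an obstacle as such — is the hyperbolic surface theory: existence and uniqueness of the geodesic representative of an essential proper arc between cusps, and the fact that geodesic representatives realize the geometric intersection number. Both are classical; if one wants to be self-contained, the needed ``no bigons'' statement holds because a bigon between $\hat c$ and $\hat c'$ would lift to two distinct geodesic segments in the hyperbolic plane with the same pair of endpoints, which is impossible. (One is tempted instead to argue purely in the plane, forming the $a$-symmetric closed curve $c\cup a(c)$, observing that it and its $\rho$-translate both enclose the origin, and comparing enclosed areas under the rotation $\rho$; but making $c\cup a(c)$ embedded requires an equivariant choice of representative, and the natural one passes through $0$ and degenerates — precisely the feature the hyperbolic argument exploits.)
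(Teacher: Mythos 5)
Your argument is correct, but it takes a genuinely different route from the paper's. The paper's proof is purely homological: it removes small discs around the points of $D$ (and around $\infty$) to get a compact surface $\Sigma$ with $p$ boundary circles, pairs opposite boundary components, and uses Lefschetz duality to define an integral intersection pairing on relative $H_1$; writing $[c]=l+b$ with $l$ the straight diameter and $b$ a combination $\sum a_iA_i$ of boundary classes, the symmetry hypothesis forces $a_j=-a_{j+rq}$, and a direct computation gives $\langle [c],[c']\rangle = 1+2a_{\kappa+q}-2a_{\kappa-q}$, which is odd and hence nonzero; the geometric intersection number is bounded below by the algebraic one. Your proof instead makes the forced crossing visible geometrically: the antipodal and rotation maps are isometries of the hyperbolic metric on $\mathbb{C}\setminus D$, and the symmetry pins both geodesic representatives to the unique fixed point $0$ of the antipodal map. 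The trade-off is clear: the paper's argument needs only elementary algebraic topology (and actually establishes the slightly stronger fact that the mod $2$ algebraic intersection number is $1$), whereas yours imports standard but nontrivial surface theory --- existence and uniqueness of geodesic representatives of essential arcs between cusps, and minimal position of geodesics via the bigon criterion, where you correctly observe that half-bigons are excluded because the endpoint sets $\{A,B\}$ and $\{\rho A,\rho B\}$ are disjoint (which uses $q\not\equiv 0,\, r \pmod p$, valid since $\gcd(p,q)=1$ and $p>2$). It is worth noting that both arguments ultimately detect the same intersection point at the origin: in the paper it appears as the term $\langle l, Rl\rangle=1$, in yours as the fixed point of $z\mapsto -z$.
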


\begin{proof}  For curves with ends on different points of $D$ we would like
to replace the geometric intersection number $\iota(\alpha, \beta)$ by an
algebraic one. Formally, we can consider $\f{C} \cup \{ \infty \}$ and take out small discs around the points of $D$
and , to get $\Sigma$ - a compact manifold with $p$
boundary circles $A_1, \ldots A_p$ (which we orient counterclockwise) on which the cyclic group
$\bfz_p$ still acts, with the generator $R$ sending $A_i$ to $A_{i+q}$ (as
usual $A_{p+k}=A_k$ for all $k$). We pair up the opposite boundary components
$B_i=A_i \cup A_{i+rq} $ (Note that $A_{i+rq} = R^r(A_i)$ is the circle that is diagonally opposite of $A_i$). Then, our curve $c$ represents a class in $H_1(\Sigma,
B_\kappa)$ and $c'$ represents a class in $H_1(\Sigma, B_{\kappa+1})$.
Lefschetz duality followed by the cup product gives a pairing: \[ \langle \ , \  \rangle \colon 
H_1(\Sigma, B_\mu) \times H_1(\Sigma, B_\nu) \to  H^2(\Sigma, \partial \Sigma)=\bfz \] Geometrically, for transverse curves $\alpha$ with
$[\alpha] =a$ and $\beta$ with $[\beta]=b$,  $\langle a,b \rangle$ is the number of
intersections of  $\alpha$ and $\beta$ counted with signs and in particular if
it is non-zero then $\iota(\alpha, \beta)$ is also non-zero.  We claim that
$\langle [c], [c'] \rangle \neq 0$, and the lemma follows from this. 
 
To compute $ \langle [c], [c'] \rangle$, write $[c]=l+b$ where $l$ is the class
represented by the linear path $l(t) = e^{2\pi i \kappa/p}(1-t) -e^{2\pi i \kappa/p}t$ connecting the endpoints of $c$. Then $b$ lies
in the image of $H_1(\Sigma)$ of the natural map $F: H_1(\Sigma) \mapsto
H_1(\Sigma, B_\kappa)$ in the homology exact sequence of the pair $(\Sigma,
B_\kappa)$, which is to say can be represented by union of closed curves in
$\Sigma$.  In fact, since $A_i$ for $i=1,\ldots, p$ form a basis of
$H_1(\Sigma)$,  we can write $b=F(\sum a_iA_i)$ and as the map $F$ above has
kernel spanned by $A_\kappa$ and $A_{\kappa+qr}$, there is a unique such
representation with $a_\kappa=a_{\kappa+qr}=0$.

Note that since $-c(t)$ is isotopic to $c(-t)$, in particular they are homologous,
so $R^r [c]=[-c] = -[c]$ (here by abuse of notation $R$ is used to denote the action
of the on $\Sigma$ sending  $H_1(\Sigma, B_\mu)$ to $ H_1(\Sigma, B_{\mu+1})$,
and the last minus sign comes form orientation reversal).

Combined with $R^r l= -l$, we have $R^r b= -b$, that is in the representation
$b=F(\sum a_iA_i)$ we must have $a_j=-a_{j+rq}$. Now, $[c']=R [c]= R l+ R b$ and we compute:
\begin{align*} \langle [c], [c'] \rangle &= \langle l, Rl \rangle + \langle l, Rb \rangle + \langle b, Rl \rangle + \langle b,Rb \rangle \\ &= 1 + (a_{\kappa+q}-a_{\kappa+q+rq}) + (a_{\kappa-q+rq}-a_{\kappa-q}) = 1 + 2 a_{\kappa+q} - 2 a_{\kappa-q} \end{align*} which is an odd integer, hence is non-zero; as desired.
\end{proof}

\begin{remark} For $p=2$, note that $S_{p-1}$ is exact symplectomorphic to $T^*S^2$ and $B_{p,q}$ is exact symplectomorphic to $T^* \f{R}P^2$ which indeed has its zero section as an exact Lagrangian submanifold. 
\end{remark} 

Having dealt with exact Lagrangian submanifolds, we next look for essential
Lagrangian tori. We observe that the tori $\f{T}_{p-1} \subset S_{p-1}$
considered in Section \ref{section2} are invariant under the action
$\Gamma_{p,q}$. We will henceforth be concerned with the Floer cohomology of
the quotient tori in $B_{p,q}$. We denote these tori by $\f{T}_{p,q }$. 

\begin{proposition}  $HF^*(\f{T}_{p,q }; \bfz_2)$ is
non-zero (and hence is isomorphic to $H^*(\f{T}_{p,q }; \bfz_2 )$ by
Proposition \ref{toriQH}).
\end{proposition}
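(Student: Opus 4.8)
The plan is to reduce everything to the $A_n$ computation already carried out, by pulling back along the free quotient map $\pi\colon S_{p-1}\to B_{p,q}$. First I would record the basic geometry: since $\Gamma_{p,q}$ acts freely on $S_{p-1}$ and preserves $\f{T}_{p-1}$, the quotient $\f{T}_{p,q}=\f{T}_{p-1}/\f{Z}_p$ is a Lagrangian torus (orientable since $\Gamma_{p,q}$ acts holomorphically) that is $p$-fold covered by $\f{T}_{p-1}$, and $\pi$ is a local symplectomorphism with $\pi^*c_1(B_{p,q})=c_1(S_{p-1})$. Because $\f{D}$ is simply connected, any disc $u\colon(\f{D},\partial\f{D})\to(B_{p,q},\f{T}_{p,q})$ lifts to a disc on $(S_{p-1},\f{T}_{p-1})$ with the same symplectic area and Maslov index; in particular every class in $\pi_2(B_{p,q},\f{T}_{p,q})$ is the image of one in $\pi_2(S_{p-1},\f{T}_{p-1})$, so the monotonicity relation $2\,\omega(\beta)=\tau\,\mu(\beta)$ descends to $\f{T}_{p,q}$ and its minimal Maslov number is again $2$. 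Thus $\f{T}_{p,q}$ is a monotone Lagrangian torus in a symplectic (Stein, hence geometrically bounded) $4$-manifold with $N_{\f{T}_{p,q}}=2$, and Proposition~\ref{toriQH} applies: it suffices to prove $c(\f{T}_{p,q})=0\in H_1(\f{T}_{p,q};\bfz_2)$.

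Next I would compute $c(\f{T}_{p,q})$ by lifting the disc count that defines it. Pick an $\omega$-compatible $J$ on $B_{p,q}$ that is regular for Maslov index $2$ discs with boundary on $\f{T}_{p,q}$, and let $\tilde J$ be its $\Gamma_{p,q}$-invariant lift to $S_{p-1}$. Every Maslov $2$ $\tilde J$-disc on $\f{T}_{p-1}$ is a lift of a Maslov $2$ $J$-disc on $\f{T}_{p,q}$, and since the linearised Cauchy--Riemann operator is a local object carried across the covering by $\pi$, such a disc is regular for $\tilde J$ if and only if its image is regular for $J$; hence $\tilde J$ is regular for Maslov $2$ discs on $\f{T}_{p-1}$, and (as $c$ is independent of the choice of regular structure) $\tilde J$ still computes $c(\f{T}_{p-1})$. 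Choosing a generic point $x\in\f{T}_{p,q}$ with only finitely many Maslov $2$ discs through it and one of its $p$ preimages $\tilde x\in\f{T}_{p-1}$, lifting each disc so that it passes through $\tilde x$ sets up a bijection between the Maslov $2$ discs through $x$ and the Maslov $2$ discs through $\tilde x$, with boundary classes matching under $\pi_*\colon H_1(\f{T}_{p-1};\bfz_2)\to H_1(\f{T}_{p,q};\bfz_2)$. Summing over these discs gives $c(\f{T}_{p,q})=\pi_*\,c(\f{T}_{p-1})$. But $c(\f{T}_{p-1})=c(\f{T}_n)$ with $n=p-1>0$, which vanishes by Lemma~\ref{sections}; hence $c(\f{T}_{p,q})=0$, and Proposition~\ref{toriQH} yields $HF^*(\f{T}_{p,q};\bfz_2)\simeq H^*(\f{T}_{p,q};\bfz_2)\neq0$.

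The step I expect to be the main obstacle is the regularity bookkeeping in the middle: one wants a $\Gamma_{p,q}$-invariant almost complex structure on $S_{p-1}$ that is nonetheless regular for Maslov index $2$ discs on $\f{T}_{p-1}$, even though invariant structures are not generic a priori. This is resolved by the Lazzarini-type simplicity of Maslov $2$ discs (already used in defining $c$) together with the fact that regularity, being cut out by the linearised operator, transports across the local diffeomorphism $\pi$; the remaining point, that the $p$ lifts of a Maslov $2$ disc through a generic $x$ pass through the $p$ preimages of $x$ one each so that the lifting map is a genuine bijection respecting multiplicities and boundary homology classes, is a routine consequence of freeness of the action.
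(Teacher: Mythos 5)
Your argument is correct, but it reaches the conclusion by a genuinely different route from the paper's. You compute the Biran--Cornea obstruction class directly: unique lifting of discs through the covering $\pi\colon S_{p-1}\to B_{p,q}$ gives a bijection between Maslov index $2$ discs through a generic $x\in\f{T}_{p,q}$ and those through a chosen preimage $\tilde x\in\f{T}_{p-1}$, whence $c(\f{T}_{p,q})=\pi_{*}c(\f{T}_{p-1})=0$ by Lemma~\ref{sections}, and Proposition~\ref{toriQH} finishes. The paper instead lifts the entire pearl complex: it takes a regular triple $(f,\rho,J)$ downstairs with a unique maximum $m$, pulls it back to an equivariant (and, by the same local-diffeomorphism transport of regularity that you invoke, still regular) triple upstairs, and uses the rank-one top-degree cohomology of $\f{T}_{p-1}$ (Proposition~\ref{QHup}) together with $R$-equivariance of the lifted differential to see that the unique top-degree cycle over $\bfz_2$ must be $M=\sum_i m_i$; since $d'M$ is the total lift of $dm$, this forces $dm=0$ and $m$ survives. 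Your version is more economical --- it needs only the disc count of Lemma~\ref{sections} rather than the full Floer cohomology computation upstairs, and it avoids chain-level bookkeeping --- while the paper's version never has to track which boundary homology classes the lifted discs carry, only that the lifted differential commutes with the deck transformation. One small quibble: your parenthetical that $\f{T}_{p,q}$ is orientable ``since $\Gamma_{p,q}$ acts holomorphically'' is not by itself a proof (a holomorphic ambient action need not preserve the orientation of an invariant Lagrangian); the correct reason is that $\Gamma_{p,q}$ acts on $\f{T}_{p-1}\cong S^1\times S^1$ by rotating both the base curve $\gamma$ and the vanishing cycles, so the quotient is again a torus, which is what Proposition~\ref{toriQH} requires.
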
 \begin{proof} Start with a pearl complex
$\mathcal{C}^*(\f{T}_{p,q };f,\rho,J)$ of $\f{T}_{p,q }$, given by some
generic Morse function $f$ and metric $\rho$ on $\f{T}_{p,q }$ and an
almost-complex structure $J$ on $B_{p,q}$. We can assume without loss
of generality that $f$ has a unique
maximum, giving rise to unique top degree generator $m$.   Consider the lifted
structures $f'$, $\rho'$ on $\f{T}_{p-1}$ and $J'$ on $S_{p-1}$.  Then since
every pearly trajectory in $S_{p-1}$ projects to one in $B_{p,q}$, and
conversely, every pearly trajectory in $B_{p,q}$ lifts to $S_{p-1}$ uniquely
given a starting point, if the triple  $(f, \rho, J)$ is regular for $(B_{p,q},
\f{T}_{p,q})$, the triple $(f', \rho', J')$ is regular for $(S_{p-1},
\f{T}_{p-1})$.

We see that $\mathcal{C}^*(\f{T}_{p-1};f',\rho',J')$ has $p$ top degree
generators $m_1, \ldots m_p$, the lifts of $m$, with $R m_i=m_{i+1}$ where $R$ is a generator of the cyclic group $\f{Z}_p$ acting on $S_{p-1}$ (we take $m_{p+1}=m_1$). We note that by equivariance $R d'm_i= d'
(Rm_i)$, and since the rank of the top degree homology is 1, the
element $M$ with $d'M=0$ must have $RM=M$ (since we are working over $\bfz_2$
coefficients). The only such element is $M=\sum m_i$. Then again, by
the
correspondence between the pearly trajectories,  $d' M$ is the (total) lift of
$dm$. Since this is 0, then so is $dm$, hence $m$ survives in cohomology. \end{proof}

\begin{proposition} If $T$ is a monotone Lagrangian 2-torus in a symplectic 4-manifold $X$, and $HF^* (T ; \bfz_2)\simeq H^* (T; \bfz_2)$, then $SH^*(X)$ is not zero. 

\end{proposition}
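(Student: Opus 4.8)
The plan is to invoke the general principle, recorded as Proposition 5.2 in \cite{biased}, that a Floer-theoretically essential Lagrangian obstructs the vanishing of symplectic cohomology. Concretely, I would use the closed-open string map $\mathcal{CO}\colon SH^*(X) \to HF^*(T,T)$, which is a unital ring homomorphism. If $SH^*(X) = 0$, then in particular the unit $1 \in SH^0(X)$ is zero, so its image $\mathcal{CO}(1) \in HF^*(T,T)$ is zero; but $\mathcal{CO}$ is unital, so $\mathcal{CO}(1)$ is the unit of $HF^*(T,T)$. Hence $HF^*(T,T)$ is a ring in which $1 = 0$, i.e.\ $HF^*(T,T) = 0$. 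This contradicts the hypothesis that $HF^*(T;\bfz_2) \simeq H^*(T;\bfz_2)$, which is nonzero. Therefore $SH^*(X) \neq 0$.

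The steps in order: first, recall that $HF^*(T,T;\bfz_2)$ is a well-defined unital ring because $T$ is monotone with minimal Maslov number $2$ (so no disk bubbling of negative or zero Chern number occurs, and the product counting Maslov index zero triangles is defined and associative with unit) --- this was already set up in the discussion of the pearl complex above. Second, recall that for a Liouville (in particular Stein) domain $X$ and a monotone Lagrangian $T$ whose Floer cohomology is defined, there is a closed-open map $\mathcal{CO}\colon SH^*(X) \to HF^*(T,T)$ which is a unital ring homomorphism; this is standard (see \cite{biased}, around Proposition 5.2, and also \cite{ritter}). Third, run the one-line argument: $SH^*(X)=0 \Rightarrow 1_{SH}=0 \Rightarrow 1_{HF} = \mathcal{CO}(1_{SH}) = 0 \Rightarrow HF^*(T,T)=0$, contradicting $HF^*(T;\bfz_2)\simeq H^*(T;\bfz_2)\ne 0$.

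I do not expect a genuine obstacle here, since all the analytic input (well-definedness of $HF^*(T,T)$ for monotone $T$ with $N_T\ge 2$, existence and unitality of the closed-open map for geometrically bounded / Liouville targets) is established in the literature cited in the excerpt; the only mild subtlety is bookkeeping of gradings and coefficients, but working over $\bfz_2$ with the $\bfz/N_T$-graded (or $\Lambda$-linear) theory as above makes this harmless. If one prefers to avoid the closed-open formalism entirely, an alternative is to quote \cite[Proposition 5.2]{biased} directly: it states precisely that if a closed monotone Lagrangian in a Liouville manifold has nonzero Floer cohomology then symplectic cohomology is nonzero. I would present the proof as a short application of that proposition, with the closed-open unit argument included as the reason it holds.

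\begin{proof} This is \cite[Proposition 5.2]{biased}. We recall the argument. Since $T$ is monotone with minimal Maslov number $N_T \geq 2$, the self-Floer cohomology $HF^*(T,T;\bfz_2)$ is a well-defined unital ring. There is a closed-open map $\mathcal{CO} \colon SH^*(X) \to HF^*(T,T;\bfz_2)$ which is a unital ring homomorphism. If $SH^*(X) = 0$, then the unit $1 \in SH^0(X)$ vanishes, hence so does $\mathcal{CO}(1)$; but $\mathcal{CO}(1)$ is the unit of $HF^*(T,T;\bfz_2)$, forcing $HF^*(T,T;\bfz_2) = 0$. This contradicts the hypothesis $HF^*(T;\bfz_2) \simeq H^*(T;\bfz_2) \neq 0$. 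Therefore $SH^*(X) \neq 0$. \end{proof}
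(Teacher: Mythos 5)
Your proof is correct and takes essentially the same route as the paper's: both reduce the statement to \cite[Proposition 5.2]{biased}, whose underlying mechanism is exactly the unital closed-open map $SH^*(X)\to HF^*(T,T)$ that you spell out. The only details the paper records that you gloss over are the Bockstein argument needed to pass from non-vanishing of $SH^*(X;\bfz_2)$ to non-vanishing of $SH^*(X)$ with other coefficients, and the observation that monotonicity of $T$ lets one drop the Bohr--Sommerfeld hypothesis appearing in Seidel's statement.
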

\begin{proof} This is essentially \cite[Proposition 5.2]{biased}. We only comment on the necessary modifications. Firstly, note that by Bockstein long exact sequence it suffices to show that $SH^*(X; \bfz_2 )\neq 0$ (cf. \cite[Remark 1.4]{recomb}).  

By using no auxiliary connection in all our Floer-theoretic constructions we
avoid the need to work over coefficient ring $\mathbb{K} \supset \bfq$, and use
$\bfz_2$ instead; additionally the fact that $\f{T}_{p,q }$ is monotone,
allows us to forego the Novikov ring coefficients and lift the requirement in \cite[Proposition 5.2]{biased} that
$T$ be Bohr-Sommerfeld.    Finally, being homologically essential over $\bfz_2 $ coefficients is by Proposition $\ref{toriQH}$ the same as $HF^*(\f{T}_{p,q} ;
\bfz_2) \neq0$. This allows one to repeat the arguments of Sections 5a and
5b of \cite{biased} to conclude $SH^* (X; \bfz_2)\neq 0$ just as in
\cite[Proposition 5.1]{biased}.\end{proof}

\begin{corollary} \label{nonempty} $SH^*(B_{p,q})$ is non-zero, in other words, $B_{p,q}$ is non-empty. \end{corollary}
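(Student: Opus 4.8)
The plan is to assemble Corollary~\ref{nonempty} as an immediate consequence of the two preceding propositions, so very little work remains. First I would recall that the tori $\f{T}_{p-1} \subset S_{p-1}$ constructed in Section~\ref{section2} are invariant under the action $\Gamma_{p,q}$ (this is the observation made just before the statement, since $\Gamma_{p,q}$ lifts a rotation of the base and a rotation of the fibres of the Lefschetz fibration, and the matching torus over a round circle centred at the origin is preserved by both), so their quotients $\f{T}_{p,q} \subset B_{p,q}$ are well-defined Lagrangian tori. The monotonicity of $\f{T}_{p-1}$ together with the fact that $\Gamma_{p,q}$ acts freely and symplectically means $\f{T}_{p,q}$ inherits a monotone structure in the quotient $B_{p,q}$, with minimal Maslov number $\geq 2$; indeed the relation $2\omega(A) = \tau\mu(A)$ descends because both the symplectic area and the Maslov index are computed from disk classes that can be lifted to $S_{p-1}$. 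Thus $\f{T}_{p,q}$ is a monotone Lagrangian $2$-torus in the symplectic $4$-manifold $B_{p,q}$.

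Next I would invoke the preceding proposition computing $HF^*(\f{T}_{p,q}; \bfz_2)$: it shows this Floer cohomology is non-zero, hence by Proposition~\ref{toriQH} it is isomorphic to $H^*(\f{T}_{p,q}; \bfz_2)$. In particular $\f{T}_{p,q}$ is a Floer-theoretically essential monotone Lagrangian torus in $B_{p,q}$, satisfying exactly the hypotheses of the proposition that asserts $SH^*(X) \neq 0$ for any symplectic $4$-manifold $X$ containing such a torus. Applying that proposition with $X = B_{p,q}$ and $T = \f{T}_{p,q}$ yields $SH^*(B_{p,q}) \neq 0$, which by the definition of non-emptiness in the introduction is precisely the statement that $B_{p,q}$ is non-empty. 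This completes the proof.

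There is essentially no obstacle here, since the corollary is a formal combination of the two propositions just proved; the only point requiring a sentence of justification is why $\f{T}_{p,q}$ is monotone in $B_{p,q}$, and this follows from the equivariance of the area and Maslov computations of Lemma~\ref{areas} and Lemma~\ref{mas} under the free action $\Gamma_{p,q}$, so that disk classes in $\pi_2(B_{p,q}, \f{T}_{p,q})$ lift to $\pi_2(S_{p-1}, \f{T}_{p-1})$ with the same area and index. The genuinely substantive inputs --- the non-vanishing of $HF^*(\f{T}_{p,q}; \bfz_2)$ via the lifting argument on the pearl complex, and the symplectic-cohomology criterion adapted from \cite[Proposition 5.2]{biased} --- have already been established above, so here one simply cites them.
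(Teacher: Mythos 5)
Your proposal is correct and matches the paper's treatment: the corollary is stated there with no written proof precisely because it is the formal combination of the two immediately preceding propositions (non-vanishing of $HF^*(\f{T}_{p,q};\bfz_2)$ and the criterion that a Floer-theoretically essential monotone Lagrangian $2$-torus forces $SH^*(X)\neq 0$). Your extra remark on why monotonicity descends to the quotient (disks lift to $S_{p-1}$ with the same area and Maslov index under the free action $\Gamma_{p,q}$) is a reasonable point to make explicit, and is consistent with what the paper leaves implicit.
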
 \QED

\begin{remark} Another way to prove $SH^*(B_{p,q})$ is non-zero goes as
follows: Since $S_{p-1}$ has exact Lagrangian submanifolds (matching spheres),
we conclude from Theorem $\ref{vit}$ that $SH^*(S_{p-1})$ is non-zero. Now,
there are obvious pull-back (total preimage) and push-forward (image) maps on
symplectic cohomology for unbranched covers which commute with the maps from
ordinary cohomology to symplectic cohomology, which shows that $SH^*(B_{p,q})
\neq 0$. Our method of proof above on the other hand yields a geometric reason
for the non-vanishing of $SH^*(B_{p,q})$.  
\end{remark}

\section{Concluding Remarks}

An exact Lefschetz fibration on $B_{p,q}$ can be found in \cite{endo}. This
Lefschetz fibration equips $B_{p,q}$ with a Stein structure, and Corollary
\ref{nonempty} implies that the symplectic cohomology is non-zero. It would be
interesting to use Seidel's computational methods (\cite{seidelD}) to compute
the symplectic cohomology of $B_{p,q}$ starting from this Lefschetz fibration.
Alternatively, $B_{p,q}$ can be constructed by Weinstein handle attachments with one 1-handle and one 2-handle to $D^4$ as in Section \ref{Leg}. The methods developed in \cite{BEE} might be useful in computing the symplectic cohomology from this description.

Let $D= \Pi_{p-1}^{-1} (0)$ be the fibre over the origin for $\Pi_{p-1} \colon S_{p-1} \to \f{C}$. There is a  special Lagrangian fibration on $S_{p-1} \backslash D$ with fibres $\f{T}_{r,\lambda} = \{ (x,y,z) \in S_{p-1} : |z|=r , |x|-|y| = \lambda \}$ (compare \cite[Section 5]{Aur}) where one could take the holomorphic volume form as in Lemma \ref{mas}. The matching tori that we considered in this paper corresponds to monotone fibres $\f{T}_{r,0}$ in this fibration. There is a unique singular fibre $\f{T}_{1,0}$ with $p$ nodal singularities.  In addition, this special Lagrangian fibration is equivariant under the action $\Gamma_{p,q}$ on $S_{p-1}$, hence it descends to a special Lagrangian fibration in the quotient $(S_{p-1} \backslash D) / \Gamma_{p,q}$ which has only one singular fibre with a unique nodal singularity. This construction gives an interesting testing ground for Strominger-Yau-Zaslow mirror symmetry conjecture and the related wall-crossing problem (cf. \cite{Aur}).

In this paper, we restricted our attention to dimension $4$. However, there is
a natural extension of our set-up to dimensions $4k$ for $k>1$. The action
$\Gamma_{p,q}$ exists and is free on the corresponding higher dimensional
$A_{p-1}$ Milnor fibre. We then obtain a non-displaceable Lagrangian $S^1
\times S^{2k-1}$ in the $A_{p-1}$ Milnor fibre and correspondingly, we get a
non-displaceable Lagrangian in the finite quotient.

\end{document}